\documentclass[a4paper,12pt]{article}
\usepackage[russian,ukrainian,english]{babel}
\usepackage[cp1251]{inputenc}

\usepackage{amscd}
\usepackage{amsthm}
\usepackage{amssymb}
\usepackage{amsmath,amsthm,amsfonts}
\usepackage{hhline}
\usepackage{varioref}

\setlength{\voffset}{0mm}

\setlength{\topmargin}{0,7mm}
\setlength{\headheight}{3mm}
\setlength{\headsep}{0,9mm}

\oddsidemargin=4,6mm \textwidth=17cm \textheight=23cm

\newtheorem{exam}{Example}[section]
\newtheorem{theorem}{Theorem}[section]
\newtheorem{lemma}{Lemma}[section]

\newtheorem{ozn}{Definition}[section]
\newtheorem{remk}{Remark }[section]
\newtheorem{nas}{Corollary}[section]
\newtheorem{prop}{Proposition}[section]
\theoremstyle{definition}

\DeclareMathOperator{\rank}{rank}
\DeclareMathOperator{\sign}{sign}

\newcommand{\cc}{\mathbb{C}}
\newcommand{\rr}{\mathbb{R}}
\newcommand{\nn}{\mathbb{N}}

\newcommand{\ff}{\mathbb{F}}

\title{Topological conjugacy classes of affine maps}

\author {Budnytska Tetiana}
\date{}
\begin{document}

\maketitle

\indent

\begin{abstract}
A map $f: \ff^n \to \ff^n$ over a field $\ff$ is called affine if it is of the form $f(x)=Ax+b$, where the matrix $A \in \ff^{\,n\times n}$ is called the linear part of affine map and $b \in \ff^n$.

The affine maps over $\ff=\rr$ or $\cc$ are investigated.
We prove that affine maps having fixed points are topologically conjugate if and only if their linear parts are topologically conjugate.
If affine maps have no fixed points and $n=1$ or $2$, then they are topologically conjugate if and only if their linear parts are either both singular or both non-singular. Thus we obtain classification up to topological conjugacy of affine maps from $\ff^n$ to $\ff^n$, where $\ff=\rr$ or $\cc$, $n\leq 2$.
\end{abstract}

AMS classification: 37C15.
\indent

\section{Introduction.}

We consider the problem of classification of affine maps up to topological conjugacy.

Throughout this paper we denote by $\ff^n$ the vector space $\rr^n$ or $\cc^n$, $n\geq 1$.

The maps $f$, $g: \ff^n \rightarrow \ff^n$ are said to be \emph{topologically conjugate} (written $f\stackrel{t}{\sim}g$), if there exists a homeomorphism $h: \ff^n \rightarrow \ff^n$ such that $g=h \circ  f\circ h^{-1}$.

The matrices $A$, $C \in \ff^{\,n\times n}$ are said to be \emph{topologically conjugate}, if the corresponding linear maps $f$, $g: \ff^n \rightarrow \ff^n$, $f(x)=Ax$ and $g(x)=Cx$ are topologically conjugate.

An element $x \in \ff^n$ is called a \emph{fixed point} of $f: \ff^n \rightarrow \ff^n$ if $f(x)=x$.

A map $f: \ff^n \rightarrow \ff^n$ of the form $f(x)=Ax+b$, $A \in \ff^{\,n\times n}$ and $b \in \ff^n$ is called an \emph{affine map} and the matrix $A$ is called the \emph{linear part} of $f$.

\pagebreak

The main result of the paper is the following theorem:
\begin{theorem}

Let $f$, $g: \ff^n \rightarrow \ff^n$, $f(x)=Ax+b$, $g(x)=Cx+d$  be affine maps and \:$\ff^n=\rr^n$ or $\cc^n$, $n\geq 1$.

\textbf{1.} If each of \,$f$ and $g$ has at least one fixed point, then $f \stackrel{t}{\sim} g$ if and only if their linear parts are topologically conjugate.

\textbf{2.} If $f$ and $g$ have no fixed points and $n=1$ or $2$, then $f\stackrel{t}{\sim}g$ if and only if \,$\det A$ and $\det C$ are either simultaneously equal to 0 or simultaneously different from 0.
\end{theorem}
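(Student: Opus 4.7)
The plan is to reduce each case to a standard form via affine conjugation and then supply explicit topological conjugacies where required.

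For Part 1, the starting observation is that the existence of a fixed point lets one conjugate away the translation. If $f(x_0)=x_0$, then $(I-A)x_0=b$, so the translation $h_f(y)=y+x_0$ is an affine homeomorphism with $h_f^{-1}\circ f\circ h_f(y)=Ay$. Hence $f\stackrel{t}{\sim}(y\mapsto Ay)$, and similarly $g\stackrel{t}{\sim}(y\mapsto Cy)$; both directions of Part 1 then follow from transitivity of $\stackrel{t}{\sim}$.

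For the ``only if'' direction of Part 2 the relevant topological invariant is whether $f$ is itself a self-homeomorphism of $\ff^n$: the affine map $f(x)=Ax+b$ is a homeomorphism exactly when $A$ is invertible (in which case $f^{-1}(y)=A^{-1}(y-b)$), and fails even to be injective when $\det A=0$. Being a homeomorphism is preserved under topological conjugacy, so $f\stackrel{t}{\sim}g$ forces $\det A$ and $\det C$ to vanish or not together. For the ``if'' direction I would first use affine conjugation to simplify: conjugation of $f(x)=Ax+b$ by $y\mapsto Py+q$ produces $y\mapsto PAP^{-1}y+Pb+(I-PAP^{-1})q$, so we may put $A$ in Jordan form and change $b$ by any element of $\operatorname{Im}(I-A)$. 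In dimension $n=1$ the no-fixed-point hypothesis forces $A=1$ and $b\neq 0$ (so automatically $\det A\neq 0$), and any two nonzero translations are conjugate by a linear rescaling.

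For $n=2$ the hypothesis forces $1$ to be an eigenvalue of $A$, and the candidate real Jordan normal forms are $\operatorname{diag}(1,0)$ (the only singular case), together with $\operatorname{diag}(1,\lambda)$ for $\lambda\neq 0,1$, the identity $I$, and the Jordan block with eigenvalue $1$ in the nonsingular case; in each case $b$ is further normalized to a canonical vector such as $(1,0)$ or $(0,1)$ by additional translations and scalings, with a shorter analogous list over $\cc$. The main obstacle is then to exhibit explicit topological conjugacies between these distinct Jordan-type normal forms within each $\det$-class, since linear conjugation alone cannot merge different Jordan types. The expected workhorse is a shearing homeomorphism of the form $h(x,y)=(x,\psi(x)y)$ that straightens the exponential orbits of $(x,y)\mapsto(x+1,\lambda y)$ into the horizontal orbits of the pure translation $(x,y)\mapsto(x+1,y)$, together with analogous ad hoc constructions for the Jordan-block case and the singular case. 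Once these conjugacies are in hand, Part 2 follows by transitivity.
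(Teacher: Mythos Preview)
Your plan matches the paper's proof closely: Part~1 is exactly the translation-to-a-fixed-point argument of Theorem~\ref{pro-neryx-tochk}, and for Part~2 the paper likewise reduces $A$ to real Jordan form by a linear conjugacy, normalizes the translation vector by a further affine conjugacy, and then supplies explicit homeomorphisms merging the resulting normal forms (Theorems~\ref{detA-ne-0+topol. spryaj. z syvom} and~\ref{detA=0, detC=0}). Your proposed shearing conjugacy $h(x,y)=(x,\psi(x)y)$ for $(x,y)\mapsto(x+1,\lambda y)$ is precisely the paper's $\phi_3(x_1,x_2)=(x_1,\,x_2\,\alpha^{-x_1})$ in Case~2 of Theorem~\ref{detA-ne-0+topol. spryaj. z syvom}.

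There is, however, a genuine gap at this very step, and the paper shares it. The relation forced on $\psi$ is $\psi(x+1)=\psi(x)/\lambda$; for $\lambda>0$ one may take $\psi(x)=\lambda^{-x}$, but for $\lambda<0$ any continuous real $\psi$ satisfying this must change sign on every unit interval and hence vanish somewhere, so $h$ is not a homeomorphism (and the paper's $\alpha^{-x_1}$ is simply not real-valued for non-integer $x_1$ when $\alpha<0$). This is not a repairable technicality: the map $(x,y)\mapsto(x+1,\lambda y)$ with $\lambda<0$ is an orientation-reversing self-homeomorphism of $\rr^2$, while every translation is orientation-preserving, and orientation type is a conjugacy invariant in $\mathrm{Homeo}(\rr^n)$ (equivalently, the orbit space $\rr^2/\langle f\rangle$ is an open M\"obius band rather than a cylinder). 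Hence over $\rr$ with $n=2$ the nonsingular fixed-point-free class actually splits further by $\sign(\det A)$, and Part~2 as stated is false in that case. Over $\cc$ the obstruction disappears, since every $\cc$-linear isomorphism has positive real determinant, and one can take $\psi(z)=e^{-z\log\lambda}$ for a fixed branch of $\log\lambda$; your sketch and the paper's argument go through there, and over $\rr$ once the sign of the determinant is added as an invariant.
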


\indent

%------------------------------------------------------------------------------------------------------

\section{Topological classification of linear maps from $\rr^n$ to $\rr^n$, $n\geq1$.}

We will use the classification of linear maps up to topological conjugacy for the classification of affine maps, therefore we recall some known results about linear maps.

Let $f: \rr^n \rightarrow \rr^n$, $f(x)=Ax$ be a linear map. Then the \textbf{real canonical form} (written ${\cal R}_A$ or RCF) of the matrix $A$ (see~\cite{Palis}) may be written in the form:
$$
\left (
\begin{tabular} {cccc}
$A_+$ & 0     & 0          & 0     \\
0     & $A_-$ & 0          & 0     \\
0     & 0     & $A_\infty$ & 0     \\
0     & 0     & 0          & $A_0$ \\
\end{tabular} \right ),
$$
where the matrices $A_\alpha$, $\alpha$ =~$+$,~$-$,~$\infty$,~$0$, satisfy conditions listed in the following table:

$$
\begin{tabular}{|p{3.1cm}|p{4.0cm}|} \hline
Matrix   & The eigenvalues $\lambda$ of this matrix\\ \hline
$A_+ $              & $0<|\lambda |<1$    \\ \hline
$A_-$               & $|\lambda |>1$      \\ \hline
$A_\infty$          & $\lambda=0$         \\ \hline
$A_0$               & $|\lambda|=1$       \\ \hline
\end{tabular}
$$

For a linear map $f: \cc^n \rightarrow \cc^n$, $f(z)=Az$ by using the \textbf{Jordan canonical form} (written $J_A$) of the matrix $A$, the matrices $A_\alpha$, $\alpha$ =~$+$,~$-$, $\infty$,~$0$ are defined similarly.

\indent

The maps $f$, $g: \rr^n \rightarrow \rr^n$ are said to be \emph{linearly conjugate} (written $f\stackrel{\ell}{\sim}g$), if there exists a linear bijection $h: \rr^n \rightarrow \rr^n$ such that $g=h\circ f\circ h^{-1}$.

Clearly, linear conjugacy implies topological conjugacy.

A map $f: \rr^n \rightarrow \rr^n$ is called \emph{periodic} if there is $k \in \nn$ such that $f^k=id_{\rr^n}$. The smallest possible $k$ is called the \emph{period} of map $f$.

Kuiper and Robbin~\cite{Kuip-Robb, Robb} gave the topological classification of those linear maps from $\rr^n$ to $\rr^n$, whose matrices have no eigenvalues which are the roots of unity.

\begin{theorem}\cite{Kuip-Robb,Robb} \label{klas-matr-liniu-vidob}
Let $f$, $g: \rr^n \rightarrow \rr^n$, $f(x)=Ax$, $g(x)=Cx$ be linear maps, whose matrices have no eigenvalues which are the roots of unity.
$$
\begin{array}{lllllll}

\mbox{Then}\:\, f\stackrel{t}{\sim}g \:\, \mbox{if and only if}&
$$
\left\{
\begin{array}{ll}
\rank(A_+)=\rank(C_+), \: \sign(\,\det(A_+)\,)=\sign(\,\det(C_+)\,),\\
\rank(A_-)=\rank(C_-), \: \sign(\,\det(A_-)\,)=\sign(\,\det(C_-)\,), \\
A_\infty = C_\infty, \\
A_0 = C_0.
\end{array} \right.
$$

&\qquad &\qquad &\qquad
\end{array}
$$
\end{theorem}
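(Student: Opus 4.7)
The plan is to use the invariant decomposition $\rr^n = V_+ \oplus V_- \oplus V_\infty \oplus V_0$ induced by the real canonical form, so that $A$ acts as $A_+ \oplus A_- \oplus A_\infty \oplus A_0$, and to treat each summand separately. Because linear conjugacy implies topological conjugacy, one first replaces $A$ and $C$ by their real canonical forms ${\cal R}_A$ and ${\cal R}_C$, and then attempts to build the conjugating homeomorphism $h$ as a direct product $h_+ \oplus h_- \oplus h_\infty \oplus h_0$ over the four blocks.

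For sufficiency, the blocks $A_\infty$, $A_0$ are already equal to $C_\infty$, $C_0$ by hypothesis, so the identity conjugates them. On $V_+$, equip the space with an adapted norm making $A_+$ a strict contraction, and form the fundamental annulus $D=\{x:\|A_+ x\|\le \|x\|\le 1\}$. A homeomorphism $h_+$ of $V_+$ intertwining $A_+$ and $C_+$ can be assembled from any homeomorphism of unit spheres that intertwines the induced maps $A_+/\|A_+\cdot\|$ and $C_+/\|C_+\cdot\|$, extended by the contracting orbits into the interior. Such a sphere-level homeomorphism exists exactly when the two induced maps are isotopic, and that is detected precisely by the dimension of the sphere ($\rank A_+=\rank C_+$) and the orientation class ($\sign\det A_+=\sign\det C_+$). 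The same argument applied to $A_-^{-1}$ and $C_-^{-1}$ handles $V_-$.

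For necessity, the starting observation is that any topological conjugacy $h$ must carry invariant topological objects of $A$ to the corresponding ones of $C$. The stable set $W^s(0)=\{x:A^k x\to 0\}$ equals $V_+\oplus V_\infty$; the unstable set $W^u(0)$ equals $V_-\oplus V_\infty$; and $V_\infty$ is characterised as the set of points with eventually-zero forward orbit, since $A_\infty$ is nilpotent. Thus $\dim V_\pm$, $\dim V_\infty$, $\dim V_0$ are topological invariants, yielding the rank conditions. The sign of $\det A_\pm$ is recovered as the local degree of $A$ at the fixed point $0$ restricted to the appropriate invariant subspace. The equality $A_\infty=C_\infty$ follows because the Jordan type of a nilpotent linear map is determined by the sequence $\dim\ker A^k$, which is visibly topological. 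For $A_0=C_0$ the no-roots-of-unity hypothesis is indispensable: a topological conjugacy on the center subspace preserves rotation numbers on invariant tori, and in the absence of resonances these numbers determine the matrix up to linear conjugacy, hence up to equality after placement in canonical form.

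I expect the main obstacle to be the necessity direction, and within it the structural step that the topological conjugacy splits along the spectral decomposition --- this is what lets the four classification invariants be read off block-by-block. Once the spaces $V_+, V_-, V_\infty, V_0$ are pinned down by intrinsic dynamical features of the orbits, the remaining invariants (rank, determinantal sign, Jordan type of $A_\infty$, rigidity of $A_0$) are extracted by standard arguments. On the sufficiency side the only delicate point is making the radial extension on $V_+$ and $V_-$ match across the gluing boundary of the fundamental annulus in an orientation-consistent manner, which is exactly what the two sign conditions guarantee.
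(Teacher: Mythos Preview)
The paper does not prove Theorem~\ref{klas-matr-liniu-vidob}; it is quoted from Kuiper--Robbin~\cite{Kuip-Robb,Robb} and used as a black box throughout. So there is no ``paper's own proof'' to compare against, and what you have written is an outline of how the original Kuiper--Robbin argument might go.

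Your sufficiency sketch is broadly in line with the standard approach. The necessity direction, however, has a real gap at the hardest point. The assertion that ``a topological conjugacy on the center subspace preserves rotation numbers on invariant tori, and in the absence of resonances these numbers determine the matrix up to linear conjugacy'' is not a proof: a general $A_0$ need not have invariant tori (there may be nontrivial Jordan blocks with unit-modulus eigenvalue), rotation-number arguments do not by themselves recover the Jordan structure, and even in the semisimple case one must explain why the conjugacy respects the individual eigenspaces before any rotation-number rigidity can be invoked. This step is precisely where Kuiper and Robbin do substantial work, and it is where the ``no roots of unity'' hypothesis is actually consumed; a one-line appeal to rotation numbers does not substitute for it. A secondary slip: your description of the unstable set as $V_-\oplus V_\infty$ is wrong, since points in $V_\infty$ are eventually zero under forward iteration and have no backward orbit; $V_\infty$ lies in $W^s(0)$, not $W^u(0)$. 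This affects the bookkeeping you use to separate the dimensions of $V_-$ and $V_\infty$ topologically.
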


\begin{remk}
The equalities \,$A_\infty = C_\infty$ and $A_0 = C_0$ hold up to the order of diagonal blocks of the matrices.
\end{remk}

Kuiper and Robbin~\cite{Kuip-Robb} showed that the general problem reduced to the case when $A$ and $C$ were periodic matrices. They conjectured that for all periodic linear maps topological conjugacy is equivalent to linear conjugacy, and they proved this when periods of maps are equal to $s$~=~1, 2, 3, 4 or 6.

Later, other conditions for which topological conjugacy is equivalent to linear conjugacy were found.
The problem of the topological classification of matrices with eigenvalues which are the roots of unity was partially solved by Kuiper and Robbin~\cite{Kuip-Robb, Robb}, Cappell and Shaneson~\cite{Capp-conexamp, Capp-2th-nas-n<=6, Capp-big-n<6, Capp-differ, Capp-Contemp-n<6}, Hsiang and Pardon~\cite{Pardon}, Madsen and Rothenberg~\cite{Madsen}, Schultz~\cite{Schultz}.

Among these papers we will consider the works of Cappell and Shaneson. They found counterexample to the conjecture of Kuiper and Robbin~\cite{Capp-conexamp} and they proved that for periodic linear maps from $\rr^n$ to $\rr^n$, $n\leq 5$ topological conjugacy is equivalent to linear conjugacy~\cite{Capp-2th-nas-n<=6,Capp-big-n<6,Capp-Contemp-n<6}.

Since \textbf{necessary and sufficient condition for linear conjugacy of two linear maps is equivalent to the equality of RCF's of their matrices}, it follows that two periodic linear maps from $\rr^n$ to $\rr^n$, $n\leq 5$, \,$f(x)=Ax$ and \,$g(x)=Cx$ are topologically conjugate if and only if $A_0 = C_0$.

The following proposition is a conclusion of the results of Kuiper and Robbin, Cappell and Shaneson.

%%%%%%%%%-------------ТВЕРДЖЕННЯ-----------%%%%%%%%%
\begin{prop}\label{klas-vsix-liniu-vidob-5-5}
Let $f$, $g: \rr^n \rightarrow \rr^n$, $n\leq 5$, $f(x)=Ax$, $g(x)=Cx$ be linear maps.
$$
\begin{array}{ll}

\mbox{Then} \:\, f\stackrel{t}{\sim}g \:\: \mbox{if and only if} &
$$
\left\{
\begin{array}{ll}
\rank(A_+)=\rank(C_+), \: \sign(\,\det(A_+)\,)=\sign(\,\det(C_+)\,),\\
\rank(A_-)=\rank(C_-), \: \sign(\,\det(A_-)\,)=\sign(\,\det(C_-)\,), \\
A_\infty = C_\infty, \\
A_0 = C_0.
\end{array} \right.
$$

\qquad \qquad \qquad
\end{array}
$$

\end{prop}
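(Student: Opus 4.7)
The plan is to assemble the proposition directly from the two classification results already recalled: Theorem~\ref{klas-matr-liniu-vidob} handles the non-periodic dynamics, while the Cappell--Shaneson theorem, valid in dimensions $n\le 5$, handles the periodic part. Together the four block-wise conditions of the statement exhaust the topological obstructions to conjugacy.

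First, I would use the RCF to replace $f$ and $g$, up to linear (hence topological) conjugacy, by their block-diagonal representatives acting on splittings $\rr^n=V_+\oplus V_-\oplus V_\infty\oplus V_0$ of the source and target. On the subspace $V_+\oplus V_-\oplus V_\infty$ the restricted map has no eigenvalue on the unit circle, hence no root of unity, so Theorem~\ref{klas-matr-liniu-vidob} applies verbatim: topological conjugacy there is pinned down by the rank and sign-of-determinant data of the $A_{\pm}$ blocks together with the coincidence $A_\infty=C_\infty$. On $V_0$ the map has all eigenvalues on the unit circle, and after invoking the Kuiper--Robbin reduction which turns this part of the problem into a problem about periodic linear maps, the Cappell--Shaneson theorem reduces topological conjugacy to linear conjugacy, which is equivalent to the equality $A_0=C_0$ (up to ordering of diagonal blocks, as in Remark~2.1).

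For the sufficiency direction, I would manufacture a global conjugating homeomorphism as a product $h=h_+\times h_-\times h_\infty\times h_0$ of the block-wise conjugacies just produced, using that a product of homeomorphisms intertwining the block restrictions intertwines the full block-diagonal maps. For necessity, I would invoke the necessity halves of Theorem~\ref{klas-matr-liniu-vidob} and of Cappell--Shaneson separately on each block, after the RCF reduction.

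The main obstacle I anticipate is in the necessity half: an arbitrary topological conjugacy $h$ between $f$ and $g$ need not respect the RCF splitting a priori, so the preservation of each of the four block invariants by $h$ has to be extracted from the intrinsic arguments of Kuiper--Robbin and Cappell--Shaneson rather than from a naive projection onto invariant subspaces. Once that preservation is granted, the proposition is a straightforward bookkeeping combination of the two cited theorems, applicable without exception whenever $n\le 5$ because Cappell and Shaneson's dimensional hypothesis is exactly met.
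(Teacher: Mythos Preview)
Your proposal is correct and matches the paper's approach: the paper does not give a detailed proof of this proposition at all, merely stating that it is ``a conclusion of the results of Kuiper and Robbin, Cappell and Shaneson,'' after noting that Kuiper--Robbin reduce the general problem to the periodic case and that Cappell--Shaneson show topological conjugacy equals linear conjugacy for periodic maps in dimension $\le 5$. Your write-up actually supplies more detail than the paper does---the block-by-block assembly and the remark that the necessity direction relies on the intrinsic invariance arguments of Kuiper--Robbin rather than on any naive splitting---but the underlying logic is identical.
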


\indent

\textbf{Classification up to topological conjugacy of all periodic linear maps has not been solved completely yet, and so it is still unsolved for all linear maps.
}

\indent

%-----------------------------------------------------------------

\section{Topological classification of affine maps from $\rr^n$ to $\rr^n$, $n\geq1$.}

\subsection{Classification of affine maps from $\rr^n$ to $\rr^n$, $n\geq1$, such that each of them has at least one fixed point.}

We begin with classification up to topological conjugacy of affine maps from $\rr^n$ to $\rr^n$, $n\geq1$. It is based on the existence of a fixed point of the affine map. For the large class of affine maps this problem was completely solved.

\indent

The following theorem is one of the most important results in the topological classification of affine maps.
\begin{theorem}\cite{st-1}\label{pro-neryx-tochk}
Let $f$, $g: \rr^n\rightarrow \rr^n$, $f(x)=Ax+b$ and $g(x)=Ax$.

Then $f\stackrel{t}{\sim}g$ if and only if there is $q \in\rr^n$ such that $f(q)=q$.
\end{theorem}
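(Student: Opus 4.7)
The plan is to handle the two directions separately, and in both cases the key observation is that topological conjugacy preserves fixed points.

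For the forward (``only if'') direction, note that $g(x) = Ax$ already has the obvious fixed point $0 \in \rr^n$. If $h: \rr^n \to \rr^n$ is a homeomorphism with $g = h \circ f \circ h^{-1}$, then conjugation sends fixed points of $g$ to fixed points of $f$ via $h^{-1}$. Specifically, applying the conjugacy relation to $0$ yields $0 = g(0) = h(f(h^{-1}(0)))$, hence $f(h^{-1}(0)) = h^{-1}(0)$. Thus $q := h^{-1}(0)$ is the required fixed point. This direction needs no use of the affine structure beyond the existence of a fixed point of $g$.

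For the backward (``if'') direction, assume $f(q) = q$, i.e.\ $Aq + b = q$, which is equivalent to $b = (I - A)q$. The idea is to realize the conjugacy by the translation $h(x) := x - q$, which is certainly a homeomorphism of $\rr^n$ with inverse $h^{-1}(x) = x + q$. A direct computation gives
\[
(h \circ f \circ h^{-1})(x) = f(x+q) - q = A(x+q) + b - q = Ax + (Aq + b - q) = Ax = g(x),
\]
using the relation $Aq + b = q$ at the last step. Therefore $f \stackrel{t}{\sim} g$, in fact by a conjugacy that is a translation (hence even an affine homeomorphism).

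Neither direction presents a serious obstacle: the statement is essentially the observation that an affine map is a translate of its linear part precisely when the translation vector $b$ lies in the image of $I - A$, which in turn is equivalent to the existence of a fixed point. The only thing worth being careful about is making sure the homeomorphism used in the backward direction is genuinely a homeomorphism of $\rr^n$, which is immediate for translations.
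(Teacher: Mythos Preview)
Your proof is correct and follows essentially the same approach as the paper. The only cosmetic differences are that the paper uses the translation $h(x)=x+q$ (writing $f = h\circ g\circ h^{-1}$ rather than your $g = h\circ f\circ h^{-1}$), and for the necessity direction it appeals to the general fact that topologically conjugate maps have the same number of fixed points, whereas you explicitly exhibit $q=h^{-1}(0)$; both amount to the same observation.
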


\begin{proof}
{\it{Necessity.}}
$f(x)=Ax+b\stackrel{t}{\sim}g(x)=Ax$, therefore the maps $f$ and $g$ have the same number of fixed points.

Since $g(0)=0$, it follows that $g(x)=Ax$ has at least one fixed point, therefore $f(x)=Ax+b$ also has at least one fixed point.

{\it{Sufficiency.}}
By assumption there is $q \in \rr^n$ such that $f(q)=q$.

Let $h(x)=x+q$, then it is easy to see that $f=h\circ g\circ h^{-1}$, i.e. $f\stackrel{t}{\sim}g$.
\end{proof}

\begin{theorem}\label{klas-affin-vidob}
Let $f$, $g: \rr^n \rightarrow \rr^n$, $f(x)=Ax+b$, $g(x)=Cx+d$ be affine maps such that:
\begin{enumerate}
  \item[1)] each of $f$ and $g$ has at least one fixed point;
  \item[2)] the matrices $A$ and $C$ have no eigenvalues which are the roots of unity.
\end{enumerate}
$$
\begin{array}{ll}

Then \:\, f\stackrel{t}{\sim}g \:\:\mbox{if and only if} &
$$
\left\{
\begin{array}{ll}
\rank(A_+)=\rank(C_+), \: \sign(\,\det(A_+)\,)=\sign(\,\det(C_+)\,),\\
\rank(A_-)=\rank(C_-), \: \sign(\,\det(A_-)\,)=\sign(\,\det(C_-)\,),\\
A_\infty  =  C_\infty , \\
A_0  =  C_0 .
\end{array} \right.
$$
\qquad \qquad \qquad
\end{array}
$$
\end{theorem}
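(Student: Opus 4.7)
The plan is to combine Theorem~\ref{pro-neryx-tochk} with the Kuiper--Robbin Theorem~\ref{klas-matr-liniu-vidob} in a straightforward way. The hypotheses of the theorem are precisely what is needed so that these two results apply one after the other.

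First I would use Theorem~\ref{pro-neryx-tochk} to kill the translation parts. Since $f(x)=Ax+b$ has a fixed point, the theorem gives $f \stackrel{t}{\sim} f_0$, where $f_0(x)=Ax$. Likewise, since $g(x)=Cx+d$ has a fixed point, $g \stackrel{t}{\sim} g_0$ with $g_0(x)=Cx$. Because topological conjugacy is an equivalence relation, we obtain
\begin{equation*}
f \stackrel{t}{\sim} g \ \Longleftrightarrow\ f_0 \stackrel{t}{\sim} g_0,
\end{equation*}
so the classification of $f$ and $g$ is reduced to the classification of their linear parts.

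Next I would invoke Theorem~\ref{klas-matr-liniu-vidob}. Hypothesis 2) says that the matrices $A$ and $C$ have no eigenvalues that are roots of unity, which is exactly the hypothesis of the Kuiper--Robbin theorem. Applying it to $f_0$ and $g_0$ shows that $f_0 \stackrel{t}{\sim} g_0$ is equivalent to the stated conditions on the blocks $A_+,A_-,A_\infty,A_0$ and $C_+,C_-,C_\infty,C_0$ of the real canonical forms.

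Combining the two equivalences gives the desired criterion. There is no real obstacle here: the result is essentially a packaging of Theorems~\ref{pro-neryx-tochk} and~\ref{klas-matr-liniu-vidob}, and the only point to notice is that Theorem~\ref{pro-neryx-tochk} reduces the affine problem (with fixed points) to the linear problem without introducing any new eigenvalues, so hypothesis 2) transfers verbatim to the reduced linear maps.
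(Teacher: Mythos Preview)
Your proposal is correct and matches the paper's proof exactly: the paper also uses Theorem~\ref{pro-neryx-tochk} to reduce $f$ and $g$ to their linear parts $r(x)=Ax$ and $s(x)=Cx$, and then applies Theorem~\ref{klas-matr-liniu-vidob} to these linear maps.
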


\begin{proof}
By assumption each of $f$ and $g$ has at least one fixed point.

By Theorem~\ref{pro-neryx-tochk}: \:$f(x)=Ax+b \stackrel{t}{\sim} r(x)=Ax$,

\qquad \qquad \qquad \qquad         $g(x)=Cx+d \stackrel{t}{\sim} s(x)=Cx$.

Employing Theorem~\ref{klas-matr-liniu-vidob} to the maps $r(x)=Ax$ and $s(x)=Cx$ we obtain the result.
\end{proof}

\textbf{Consequently, necessary and sufficient conditions of topological conjugacy of affine maps such that each of them has at least one fixed point coincide with the necessary and sufficient conditions of topological conjugacy of their linear parts.}

\indent

%----------------------------------------------------------------------------
\indent
\subsection{Classification of affine maps from $\rr$ to $\rr$.}

In this paper we completely solve the problem of topological classification of affine maps from $\rr^n$ to $\rr^n$, $n=1$, $2$.

\indent

At first consider the one dimensional case.

For the linear maps the following proposition is true.

\begin{prop}~\cite{Kuip-Robb}\label{klas-lin-vidob-R}
Let $f:\rr\rightarrow \rr$, $f(x)=ax$, $a \in \rr$ be a linear map.

There are seven topological conjugacy classes of linear maps from $\rr$ to $\rr$. Three classes are given by the values $a=0$, $1$, $-1$ and the remaining classes are represented by the four remaining open intervals between the values $0$,~$1$ and~$-1$ on $\rr$.

That is, if $f(x)=ax$, $g(x)=cx$, $a$, $c \in \rr$, then
$f\stackrel{t}{\sim}g$ if and only if \,$a$ and $c$ either simultaneously belong to the same component of $\rr \setminus \left\{0,1,-1\right\}$ or simultaneously equal to one of the values $0$, $1$ and $-1$.
\end{prop}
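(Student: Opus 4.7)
The plan is to split the claim into sufficiency and necessity. Sufficiency for the three singleton classes $\{0\}$, $\{1\}$, $\{-1\}$ is immediate since a map is topologically conjugate to itself. For each of the four open-interval classes, I would exhibit an explicit conjugating homeomorphism of the form
$$h(x) = \operatorname{sign}(x)\,|x|^{\alpha}, \qquad h(0)=0,$$
where $\alpha = \log|c|/\log|a|$. Whenever $a$ and $c$ lie in the same component of $\rr\setminus\{0,1,-1\}$, one has $\operatorname{sign}(a)=\operatorname{sign}(c)$ and $\log|a|,\log|c|$ of the same sign, so $\alpha>0$ and $h$ is a strictly increasing homeomorphism of $\rr$. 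A direct computation gives
$$h(ax) = \operatorname{sign}(ax)\,|ax|^{\alpha} = \operatorname{sign}(a)|a|^{\alpha}\cdot\operatorname{sign}(x)|x|^{\alpha} = c\,h(x),$$
using $|a|^{\alpha}=|c|$, which is exactly the conjugacy relation $h\circ f = g\circ h$.

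For necessity, I would separate the seven classes by topological invariants that are preserved under conjugation by a homeomorphism. First, surjectivity isolates $a=0$, since $f(\rr)=\{0\}$ iff $a=0$ and conjugation by a homeomorphism preserves being onto. Next, the fixed-point set of $f$ equals $\rr$ iff $a=1$; since conjugation sends fixed points to fixed points, this isolates the class $\{1\}$. The condition $f^2=\mathrm{id}_{\rr}$ with a unique fixed point holds precisely when $a=-1$, isolating $\{-1\}$.

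It remains to separate the four open-interval classes among maps $f$ with $a\notin\{0,1,-1\}$. The sign of $a$ is detected by monotonicity: $f$ is strictly increasing iff $a>0$. Monotonicity is a conjugacy invariant because any homeomorphism of $\rr$ is monotone, and the composition of two decreasing homeomorphisms is increasing, so $h\circ f\circ h^{-1}$ is increasing iff $f$ is. Finally, $|a|<1$ versus $|a|>1$ is distinguished by whether $0$ is an attracting or a repelling fixed point: for $0<|a|<1$ one has $f^n(x)\to 0$ for every $x$, while for $|a|>1$ every orbit with $x\neq 0$ eventually leaves any prescribed compact neighborhood of $0$. The existence of a neighborhood $U$ of the unique fixed point with $\bigcap_{n\geq 0} f^n(\overline{U})=\{0\}$ is clearly preserved under topological conjugacy, so the contracting and expanding cases cannot be conjugate.

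I expect the main obstacle to be the necessity argument for the four open intervals, and in particular verifying cleanly that monotonicity and the attracting/repelling dichotomy are genuine topological invariants of $f$; once those two invariants are in hand, the six nontrivial classes are immediately separated. The sufficiency construction via $h(x)=\operatorname{sign}(x)|x|^{\alpha}$ is routine.
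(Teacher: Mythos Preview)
Your proposal is correct, and it matches the paper's approach essentially point for point. The paper does not prove this proposition separately (it cites Kuiper--Robbin), but in the proof of the affine generalization (Proposition~\ref{klas-afin-vidob-R}) it uses the same list of separating invariants you do---constant map for $a=0$, identity for $a=1$, involution for $a=-1$, orientation-preserving for $a>0$, and convergence of iterates for $|a|<1$---and for sufficiency the same conjugating homeomorphism $h(x)=x|x|^{l-1}$ with $|a|^{l}=|c|$, which is exactly your $\operatorname{sign}(x)|x|^{\alpha}$ with $\alpha=\log|c|/\log|a|$.
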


For the affine maps we have a similar proposition.

\begin{prop}\cite{st-1}\label{klas-afin-vidob-R}
Let $f:\rr\rightarrow \rr$, $f(x)=ax+b$, $a$, $b \in \rr$ be an affine map.

There are eight topological conjugacy classes of affine maps from $\rr$ to $\rr$. Two classes are given by the values $a=0$, $-1$, another two ones by the pairs of the values $a=1$, $b=0$ and $a=1$, $b\neq0$ and the remaining classes are represented by the four remaining open intervals between the values $0$,~$1$ and~$-1$ on $\rr$.

That is, if $f(x)=ax+b$, $g(x)=cx+d$, $a$, $b$, $c$, $d \in\rr$, then $f\stackrel{t}{\sim}g$ if and only if \,$a$ and $c$ either simultaneously belong to the same component of $\rr \setminus \left\{0,1,-1\right\}$ or simultaneously equal to one of the values \,$0$, $1$ and $-1$, if \,$a=c=1$, then $b$ and $d$ either simultaneously equal to 0 or simultaneously different from 0.
\end{prop}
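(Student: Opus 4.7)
The plan is to split the argument by the number of fixed points of $f$ and $g$, which is a topological-conjugacy invariant (any conjugating homeomorphism $h$ sends the fixed-point set of $f$ bijectively onto that of $g$). Solving $(a-1)x=-b$ shows that an affine map $f(x)=ax+b$ on $\rr$ falls into exactly one of three mutually exclusive types: (a) $f=id_\rr$, when $a=1$ and $b=0$; (b) $f$ is a fixed-point-free translation, when $a=1$ and $b\neq0$; (c) $f$ has a unique fixed point, when $a\neq1$. The type of $g$ must coincide with that of $f$, so the three cases can be treated independently.

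In case (a), $f=id_\rr$ and $g=h\circ f\circ h^{-1}=id_\rr$ forces $c=1$, $d=0$, so this is a singleton conjugacy class. In case (b), both $f(x)=x+b$ and $g(x)=x+d$ with $b,d\neq0$; the linear map $h(x)=(d/b)x$ is a homeomorphism and a direct computation yields $h\circ f\circ h^{-1}=g$, so all nonzero translations form a single class. These two cases together account for the ``$a=c=1$'' clause of the statement and for the split according to whether $b$ and $d$ vanish.

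In case (c), $a\neq1$ and $c\neq1$, and by Theorem~\ref{pro-neryx-tochk} we have $f\stackrel{t}{\sim}r$ and $g\stackrel{t}{\sim}s$, where $r(x)=ax$ and $s(x)=cx$. Proposition~\ref{klas-lin-vidob-R} then characterises when $r$ and $s$ are topologically conjugate: precisely when $a$ and $c$ lie in the same component of $\rr\setminus\{0,1,-1\}$, or $a=c\in\{0,1,-1\}$. Since the value $1$ is excluded in this case, only the four open intervals and the two points $\{0\}$ and $\{-1\}$ contribute, giving six further conjugacy classes. Combined with the two from cases (a) and (b), this produces exactly the eight classes listed in the proposition, together with the stated necessary and sufficient condition.

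The main obstacle, insofar as there is one, is the translation case: one must exhibit an explicit homeomorphism between two arbitrary nonzero translations (the linear rescaling $h$ above) and note that no fixed-point-free map can be topologically conjugate to one with a fixed point. Everything else reduces cleanly to Theorem~\ref{pro-neryx-tochk} and Proposition~\ref{klas-lin-vidob-R}, so the proof is in essence a short case analysis.
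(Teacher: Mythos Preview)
Your proof is correct, but the route differs from the paper's. You reduce the fixed-point case directly to the linear classification: once $a\neq1$, Theorem~\ref{pro-neryx-tochk} gives $f\stackrel{t}{\sim}r(x)=ax$ and $g\stackrel{t}{\sim}s(x)=cx$, and then Proposition~\ref{klas-lin-vidob-R} finishes the job. The paper instead argues from first principles: it lists six topological invariants (constancy, being the identity, absence of fixed points, being an involution, orientation preservation, and global contraction $\lim_{n\to\infty}f^n(x)=b/(1-a)$) that separate the eight classes, and then for $a,c\neq1$ in the same class it exhibits the explicit conjugacy
\[
h(x)=\bigl(x-\tfrac{b}{1-a}\bigr)\bigl|x-\tfrac{b}{1-a}\bigr|^{\,l-1}+\tfrac{d}{1-c},\qquad |a|^{\,l}=|c|.
\]
Your approach is shorter and entirely legitimate given the order of statements in the paper; what the paper's direct argument buys is self-containment (it does not rely on Proposition~\ref{klas-lin-vidob-R} as input, and indeed recovers it as the special case $b=0$, as noted in Remark~3.2) together with an explicit formula for the conjugating homeomorphism in every case. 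The translation case is handled identically in both proofs.
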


\begin{proof}
The topologically conjugate maps have the same topological properties. We list those topological properties which divide the set of maps $f(x)=ax+b$, $a, b \in \rr$ into eight classes:
\begin{enumerate}
    \item[1)] $f(x)=b\equiv const$ if and only if $a=0$.
    \item[2)] $f$ is the identity if and only if $a=1$ and $b=0$.
    \item[3)] $f$ does not have any fixed point if and only if $a=1$ and $b\neq0$.
    \item[4)] $f^2$ is the identity and $f$ is not the identity if and only if $a=-1$.
    \item[5)] $f$ preserves orientation if and only if $a>0$.
    \item[6)] $\lim \limits_{n\rightarrow\infty} f^n(x)= \frac{b}{1-a}\in \rr$ for all $x \in\rr$ if and only if $|a|<1$.
\end{enumerate}

Properties 5 and 6 divide $\rr$ into four intervals and properties $1 - 4$ generate points $0$, $1$ and $-1$ between these intervals. As the number $a=1$ with $b=0$ and $b\neq0$ generates two different classes of topologically conjugate maps (properties 2 and 3\,), it follows that we get as a result the eight classes of topologically conjugate affine maps.

If $f(x)=ax+b$, $a\neq1$, $b \in \rr$ and $g(x)=cx+d$, $c\neq1$, $d \in \rr$, where $a$ and $c$ either simultaneously belong to the same component of $\rr \setminus \left\{0,1,-1\right\}$ or simultaneously equal to $0$ or $-1$, then $g=h\circ f\circ h^{-1}$, where
$$
\begin{array}{ll}

h(x)=
$$
\left\{
\begin{array}{lllll}
\: \frac{d}{1-c} \:,\qquad \qquad \qquad \qquad \qquad \qquad \qquad \quad  x=\frac{b}{1-a},       \\
(x-\frac{b}{1-a})|x-\frac{b}{1-a}|^{l-1}+\frac{d}{1-c}\:,\:|a|^l=|c|,\:\: x \neq \frac{b}{1-a}.
\end{array} \right.
$$
\end{array}
$$

\indent

The maps $f(x)=x+b$, $b\neq0$ and $g(x)=x+d$, $d\neq0$ are topologically conjugate, because $g=h\circ f\circ h^{-1}$, where $h(x)=\frac{d}{b}~x+m$, $m \in \rr$.
\end{proof}

\begin{remk}
If in Proposition~\ref{klas-afin-vidob-R} we put the map $f(x)=ax+b$, where $a\in \rr$,\, $b=0$, then we obtain Proposition~\ref{klas-lin-vidob-R}, that is quite natural, because the set of all linear maps is the subset of all affine maps.
\end{remk}

Notice that in the monograph~\cite{Pelyx} authors found solutions of functional equation $h \circ f= g\circ h$, where $h$ is a unknown map from $\rr$ to $\rr$ and $f$ and $g$ present a wide class of maps from $\rr$ to $\rr$.

%----------------------------------------------------------------------------
\indent

\subsection{Classification of affine maps from $\rr^2$ to $\rr^2$.}

It is known that topologically conjugate maps have the same number of fixed points. Therefore the classification of affine maps from $\rr^2$ to $\rr^2$ up to topological conjugacy is divided into two cases:
\begin{enumerate}
  \item[1)] topological classification of affine maps such that each of them has at least one fixed point;
  \item[2)] topological classification of affine maps, which don't have fixed points.
\end{enumerate}

%------------------------------------------------------------------------------------------------------------------
\indent

\textbf{1) Topological classification of affine maps from $\rr^2$ to $\rr^2$ such that each of them has at least one fixed point.}\qquad

By Theorem~\ref{pro-neryx-tochk} and Proposition~\ref{klas-vsix-liniu-vidob-5-5} we have:

\begin{prop}\cite{st-2}\label{klas-affin-vidob-2-2}
Let $f$, $g: \rr^2 \rightarrow \rr^2$, $f(x)=Ax+b$, $g(x)=Cx+d$ be affine maps such that each of them has at least one fixed point.
$$
\begin{array}{lllll}

\mbox{Then} \:\, f\stackrel{t}{\sim}g \:\: \mbox{if and only if} &
$$
\left\{
\begin{array}{lllll}
\rank(A_+)=\rank(C_+), \: \sign(\,\det(A_+)\,)=\sign(\,\det(C_+)\,),\\
\rank(A_-)=\rank(C_-), \: \sign(\,\det(A_-)\,)=\sign(\,\det(C_-)\,),\\
A_\infty = C_\infty, \\
A_0 = C_0.
\end{array} \right.
$$

&\qquad &\qquad &\qquad
\end{array}
$$

\end{prop}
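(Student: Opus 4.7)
The plan is to reduce the proposition to a statement about the linear parts alone, and then invoke the topological classification of linear maps in dimension $\leq 5$ (Proposition~\ref{klas-vsix-liniu-vidob-5-5}). Since each of $f$ and $g$ is assumed to have at least one fixed point, Theorem~\ref{pro-neryx-tochk} applies to both maps and yields
$$
f(x)=Ax+b \stackrel{t}{\sim} r(x)=Ax, \qquad g(x)=Cx+d \stackrel{t}{\sim} s(x)=Cx.
$$
Topological conjugacy is an equivalence relation, so the two-sided chain above gives $f\stackrel{t}{\sim} g$ if and only if $r\stackrel{t}{\sim} s$. This converts the problem into a purely linear one, for matrices $A,C\in\rr^{2\times 2}$.

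Next I would apply Proposition~\ref{klas-vsix-liniu-vidob-5-5} with $n=2$ to the linear maps $r(x)=Ax$ and $s(x)=Cx$. Since $2\le 5$, the hypothesis of that proposition is satisfied without any extra restriction on the eigenvalues (in particular, roots of unity are allowed, since the Cappell--Shaneson results cover periodic parts through $n=5$). The proposition asserts that $r\stackrel{t}{\sim} s$ is equivalent to the four listed conditions on the blocks $A_+,A_-,A_\infty,A_0$ and $C_+,C_-,C_\infty,C_0$, which are exactly the conditions in the statement we want.

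There is no real obstacle to overcome: the proposition is essentially a corollary of Theorem~\ref{pro-neryx-tochk} and Proposition~\ref{klas-vsix-liniu-vidob-5-5}, in the same spirit as Theorem~\ref{klas-affin-vidob}, with the advantage that restricting to dimension $2$ lets us drop the hypothesis that the matrices have no eigenvalues that are roots of unity. Thus the proof will be a short two-line deduction; the substantive content lies entirely in the two earlier results being invoked.
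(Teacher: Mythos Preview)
Your proposal is correct and matches the paper's approach exactly: the paper introduces this proposition with the sentence ``By Theorem~\ref{pro-neryx-tochk} and Proposition~\ref{klas-vsix-liniu-vidob-5-5} we have:'' and gives no further argument. Your reduction to the linear parts via Theorem~\ref{pro-neryx-tochk} followed by invoking Proposition~\ref{klas-vsix-liniu-vidob-5-5} at $n=2$ is precisely what the author intends.
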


%----------------------------------------------------------------------------------------------------------------
\indent

\textbf{2) Topological classification of affine maps from $\rr^2$ to $\rr^2$, which don't have fixed points.}

At first we formulate some necessary results.

\begin{lemma}\cite{st-2}\label{isnye-lambda_(A)=1}
Let $f: \rr^2 \rightarrow \rr^2$, $f(x)=Ax+b$ be an affine map.

If the map $f$ does not have any fixed point, then one of eigenvalues of the matrix $A$ is equal to 1.
\end{lemma}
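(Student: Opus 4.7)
The plan is to reduce the existence of a fixed point to a linear algebra statement about the matrix $A - I$. A point $q \in \rr^2$ is a fixed point of $f$ exactly when $Aq + b = q$, i.e.\ when $(A - I)q = -b$. So $f$ has a fixed point if and only if $-b$ lies in the image of the linear operator $A - I$.

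The proof then proceeds by contrapositive. Suppose that $1$ is not an eigenvalue of $A$. Then $\det(A - I) \ne 0$, so $A - I$ is invertible, and in particular the equation $(A - I)q = -b$ has the (unique) solution $q = -(A - I)^{-1}b$. This $q$ is a fixed point of $f$, contradicting the hypothesis that $f$ has no fixed point. Hence $1$ must be an eigenvalue of $A$.

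There is no substantive obstacle here; the statement is essentially a direct unwinding of the definition of a fixed point of an affine map, and the same argument works in any dimension $n \ge 1$ (the restriction to $n = 2$ in the lemma only reflects the context of the subsection). The only subtlety worth noting in the write-up is that one of the \emph{two} eigenvalues (in $\rr^2$, counted with algebraic multiplicity over $\cc$) must equal $1$; this follows from $\det(A - I) = 0$, regardless of whether the other eigenvalue is real or part of a complex conjugate pair, and regardless of whether $A - I$ has rank $0$ or $1$.
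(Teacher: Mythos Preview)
Your proof is correct and essentially identical to the paper's: both argue by contrapositive, observing that if $1$ is not an eigenvalue then $A-I$ is invertible, so $(A-I)q=-b$ has the solution $q=-(A-I)^{-1}b$, which is a fixed point. Your added remarks on dimension and eigenvalue subtleties are fine but not needed for the argument.
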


\begin{proof}
The map $f(x)=Ax+b$ does not have any fixed point if there is no $x \in \rr^2$ such that $f(x)=x$, thus the system $(A-E)x=-b$\, does not have any solution on $\rr^2$.

Assume that the matrix $A$ does not have eigenvalue which is equal to 1, that is, $\det(A-E)\neq 0$.
Then the system $(A-E)x=-b$ has the solution $\tilde{x}=-(A-E)^{-1}\, b$, therefore the map $f$ has the fixed point, that contradicts to the condition of the lemma.
\end{proof}

\begin{remk}
Henceforth if an affine map $f: \rr^2 \rightarrow \rr^2$, $f(x)=Ax+b$ does not have any fixed point and $\lambda^{(1)}_A$, $\lambda^{(2)}_A$ are eigenvalues of the matrix $A$, then we suppose that $\lambda^{(1)}_A=1$.
\end{remk}

%--------------------------------------------------------------------------------------------------------------------------------------------

Propositions~\ref{ne-isnye-neryx.-toch-1,1-(a_{12})}, \ref{ne-isnye-neryx.-toch-1,d}  are easy and ideologically identical, because they give necessary and sufficient conditions when concrete affine maps don't have fixed points. These results will be used in the proofs of Theorems~\ref{detA-ne-0+topol. spryaj. z syvom} and \ref{detA=0, detC=0}.

%%%%%%%%%----------------НОВЕ ТВЕРДЖЕННЯ----------------%%%%%%%%%%%
\begin{prop}\cite{st-2}\label{ne-isnye-neryx.-toch-1,1-(a_{12})}
$$
\begin{array}{cccc}

\mbox{Let}\:\: f: \rr^2 \rightarrow \rr^2,\:  f(x)=
$$
\left(\begin{array}{cc}
1  & 1 \\
0  & 1 \\
\end{array}\right)
$$
$$
\left(\begin{array}{cc}
x_1 \\
x_2 \\
\end{array}\right)
$$
+
$$
\left(\begin{array}{cc}
b_1\\
b_2 \\
\end{array}\right),
$$
~ b_1, b_2\in \rr.
\qquad & \qquad & \qquad  \quad
\end{array}
$$

The map $f$ does not have any fixed point if and only if \,$b_2 \neq 0$.
\end{prop}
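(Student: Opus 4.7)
The plan is to directly write down the fixed-point equation $f(x)=x$ and read off when it is solvable, since for this very concrete $2\times 2$ triangular matrix the analysis is just a two-line linear system. This parallels the strategy of Lemma~\ref{isnye-lambda_(A)=1}, but here we cannot invoke $\det(A-E)\neq 0$ because $A-E$ is singular, so we must inspect the system by hand.

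First I would rewrite $f(x)=x$ as $(A-E)x=-b$, which in coordinates becomes
\[
\begin{pmatrix} 0 & 1 \\ 0 & 0 \end{pmatrix}
\begin{pmatrix} x_1 \\ x_2 \end{pmatrix}
= -\begin{pmatrix} b_1 \\ b_2 \end{pmatrix}.
\]
This splits into the two scalar equations $x_2=-b_1$ and $0=-b_2$.

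For sufficiency ($b_2\neq 0$), the second equation is inconsistent, so no $x\in\rr^2$ can satisfy $f(x)=x$, hence $f$ has no fixed point. For necessity, I would prove the contrapositive: if $b_2=0$, then the second equation is automatically satisfied, and taking $x_2=-b_1$ with $x_1$ arbitrary produces infinitely many fixed points (in particular at least one), so $f$ does have a fixed point.

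There is no real obstacle here; the only thing to be careful about is the direction of the equivalence, and to note that when $b_2=0$ fixed points form an entire affine line rather than a single point (consistent with the fact that $1$ is an eigenvalue of $A$ of algebraic multiplicity $2$). The argument is a direct computation rather than an appeal to the previous lemma.
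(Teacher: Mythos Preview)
Your proof is correct and is exactly the natural direct verification. The paper itself does not supply a proof of this proposition; it merely states it (citing \cite{st-2}) and remarks that Propositions~\ref{ne-isnye-neryx.-toch-1,1-(a_{12})} and~\ref{ne-isnye-neryx.-toch-1,d} are ``easy and ideologically identical,'' so your computation of the system $(A-E)x=-b$, namely $x_2=-b_1$ and $0=-b_2$, is precisely what is being implicitly assumed.
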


%%%%%%%%%----------------НОВЕ ТВЕРДЖЕННЯ----------------%%%%%%%%%%%
\begin{prop}\cite{st-2}\label{ne-isnye-neryx.-toch-1,d}
$$
\begin{array}{cccc}

\mbox {Let}\:\: f: \rr^2 \rightarrow \rr^2,\:  f(x)=
$$
\left(\begin{array}{cccc}
1  & 0 \\
0  & \alpha \\
\end{array}\right)$$
 $$
\left(\begin{array}{cccc}
x_1 \\
x_2 \\
\end{array}\right)
$$
+
$$
\left(\begin{array}{cccc}
\delta_1\\
\delta_2 \\
\end{array}\right),
$$
\:\, \alpha\in \rr\backslash \{1\},\: \delta_1, \delta_2\in \rr. \qquad
\end{array}
$$

The map $f$ does not have any fixed point if and only if \,$\delta_1 \neq 0$.
\end{prop}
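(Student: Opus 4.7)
The plan is to write out the fixed-point equation $f(x)=x$ componentwise and read off for which values of the parameters it is solvable. Since the linear part is diagonal, the two coordinate equations decouple completely, so the analysis reduces to two one-variable conditions, mirroring the proof of Lemma~\ref{isnye-lambda_(A)=1} but carried out explicitly.

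More concretely, I would observe that $f(x)=x$ is equivalent to the system
\[
\begin{cases}
x_1+\delta_1=x_1,\\
\alpha x_2+\delta_2=x_2,
\end{cases}
\qquad\text{i.e.}\qquad
\begin{cases}
0=\delta_1,\\
(\alpha-1)x_2=-\delta_2.
\end{cases}
\]
The second equation, because $\alpha\neq 1$, admits a unique solution $x_2=-\delta_2/(\alpha-1)$ regardless of $\delta_2$. The first equation, however, contains no variable at all: it holds for some $(x_1,x_2)$ if and only if $\delta_1=0$, in which case every $x_1\in\rr$ paired with the above $x_2$ gives a fixed point.

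Combining these two observations, the system is solvable precisely when $\delta_1=0$, so $f$ fails to have a fixed point if and only if $\delta_1\neq 0$, as required. I do not anticipate a genuine obstacle here: the diagonal block structure makes the argument essentially a one-line computation, and the role of the assumption $\alpha\neq 1$ is only to guarantee that the second coordinate never poses an obstruction, isolating $\delta_1$ as the unique solvability condition. The proposition is exactly the analogue of Proposition~\ref{ne-isnye-neryx.-toch-1,1-(a_{12})} adapted to the case where the eigenvalue $1$ appears in a $1\times 1$ Jordan block rather than as part of a $2\times 2$ Jordan block.
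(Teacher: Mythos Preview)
Your argument is correct and is exactly the direct componentwise computation the paper has in mind; the paper itself does not write out a proof, merely remarking that Propositions~\ref{ne-isnye-neryx.-toch-1,1-(a_{12})} and~\ref{ne-isnye-neryx.-toch-1,d} are ``easy and ideologically identical''. Your solution fills in precisely those omitted details.
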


%%%%%%%%%-----------НОВЕ ТВЕРДЖЕННЯ------------%%%%%%%%%%%
\begin{prop}\cite{st-2}\label{topol. spryaj. zsyviv}
Let $f$, $g: \rr^2 \rightarrow \rr^2$, $f(x)=x+b$, $g(x)=x+d$, $b$,~$d \in \rr^2$ be affine maps.

Then $f\stackrel{t}{\sim}g$ if and only if \,$b$ and $d$ are either simultaneously equal to 0 or simultaneously different from 0.
\end{prop}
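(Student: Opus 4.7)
The plan is to separate necessity from sufficiency and then observe that in the nontrivial case a linear conjugacy already does the job, so no nonlinear homeomorphism needs to be constructed.

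For necessity, I will use the fact that topologically conjugate maps have the same fixed-point structure: if $b=0$ then $f=\mathrm{id}_{\rr^2}$, whose fixed-point set is all of $\rr^2$, and any $g=h\circ f\circ h^{-1}=\mathrm{id}_{\rr^2}$ as well, forcing $d=0$. Symmetrically, $d=0$ forces $b=0$. Hence if one of $b,d$ is zero and the other is not, $f\not\stackrel{t}{\sim}g$.

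For sufficiency, I consider two cases. If $b=d=0$, both maps are the identity and the conjugacy $h=\mathrm{id}$ works. If $b\neq 0$ and $d\neq 0$, I will produce a linear bijection $L:\rr^2\to\rr^2$ with $Lb=d$; this is possible because I can extend $b$ to a basis $\{b,b'\}$ of $\rr^2$, extend $d$ to a basis $\{d,d'\}$, and define $L$ on these bases by $Lb=d$, $Lb'=d'$. Then for every $x\in\rr^2$,
\[
L(f(x))=L(x+b)=Lx+Lb=Lx+d=g(L(x)),
\]
so $L\circ f=g\circ L$. Since $L$ is a linear bijection of a finite-dimensional space, it is automatically a homeomorphism, and $f\stackrel{t}{\sim}g$ is witnessed by $h=L$.

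The only point that requires any care is the necessity half, and even there the argument reduces to the elementary remark that the identity map is characterized among continuous self-maps of $\rr^2$ by having every point fixed, a property preserved under topological conjugacy. In particular, no delicate invariants (rotation number, behaviour at infinity, etc.) are needed, because the only dichotomy to be detected is identity versus fixed-point-free translation.
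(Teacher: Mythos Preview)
Your proof is correct and follows essentially the same route as the paper's: for necessity you use that the identity is preserved under conjugacy (the paper phrases this as a fixed-point count), and for sufficiency with $b,d\neq 0$ you build an invertible linear map sending $b$ to $d$, exactly as the paper does with its matrix $B$. The only difference is that you spell out the basis-extension construction of $L$, which the paper leaves implicit.
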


\begin{proof}
If $b\neq0$ and $d\neq0$, then $f\stackrel{t}{\sim}g$, because there is a homeomorphism $\psi: \rr^2 \rightarrow \rr^2$,
$$
\begin{array}{cc}

\psi(x)=Bx,\:\: \mbox {where the matrix} \: B \: \mbox {has properties}: \:
$$
\left\{ \begin{array}{ll}
 \det B\neq 0, \\
 Bb=d
\end{array} \right.
$$
\:\mbox {and} \:\: g=\psi \circ f\circ \psi^{-1}.
\qquad \qquad \qquad
\end{array}
$$

If $b\neq0$ and $d=0$ (or $b=0$ and $d\neq0$), then one of the maps is the identity and another is not the identity, therefore $f$ and $g$ are not topologically conjugate (because they have the different numbers of fixed points).
\end{proof}

\indent

As topologically conjugate maps are either simultaneously bijective or simultaneously non-bijective, it follows that classification up to topological conjugacy of affine maps from $\rr^2$ to $\rr^2$, which don't have fixed points is divided into two cases:
\begin{enumerate}
\item[a)] topological classification of bijective affine maps, which don't have fixed points;
\item[b)] topological classification of non-bijective affine maps, which don't have fixed points.
\end{enumerate}

\indent

\textbf{a) Topological classification of bijective affine maps, which don't have fixed points.}

The following theorem is the basic result of the case a) because it asserts that a bijective affine map, which doesn't have any fixed point and $g(x)=x+d$, $d\in\rr^2 \backslash \{0\}$ are topologically conjugate.

%%%%%%%%%----------------НОВА ТЕОРЕМА----------------%%%%%%%%%%%
\begin{theorem}\cite{st-2}\label{detA-ne-0+topol. spryaj. z syvom}
Let $f$, $g: \rr^2 \rightarrow \rr^2$, $f(x)=Ax+b$, $g(x)=x+d$ be affine maps such that:
\begin{enumerate}
  \item[1)] $f$ and $g$ have no fixed points;
  \item[2)] $\det A\neq0$.
\end{enumerate}
Then $f\stackrel{t}{\sim}g$.
\end{theorem}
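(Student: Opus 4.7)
The strategy is to normalize $f$ via a linear conjugacy (which is automatically a topological conjugacy) to a short list of canonical forms, produce in each case an explicit homeomorphism conjugating the normal form to a pure translation, and then invoke Proposition~\ref{topol. spryaj. zsyviv} to bridge from that translation to the prescribed $g$.

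First, to identify the canonical forms: by Lemma~\ref{isnye-lambda_(A)=1} one eigenvalue of $A$ equals $1$, and since $\det A\neq 0$ while complex eigenvalues of a real matrix come in conjugate pairs, the second eigenvalue $\alpha$ is a nonzero real number. So $A$ is similar over $\rr$ either to the Jordan block $\begin{pmatrix}1 & 1\\ 0 & 1\end{pmatrix}$ or to $\operatorname{diag}(1,\alpha)$ with $\alpha\in\rr\setminus\{0\}$. The case $A=I$ makes $f$ itself a nonzero translation and is handled by Proposition~\ref{topol. spryaj. zsyviv}. In the diagonal case with $\alpha\neq 1$, translating the origin to the fixed point of the $x_2$-dynamics and then rescaling $x_1$ reduces $f$ to the form $(x_1,x_2)\mapsto(x_1+1,\alpha x_2)$, where Proposition~\ref{ne-isnye-neryx.-toch-1,d} is used to ensure that the $x_1$-shift is nonzero. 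In the Jordan block case, Proposition~\ref{ne-isnye-neryx.-toch-1,1-(a_{12})} forces $b_2\neq 0$.

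Next, for each normal form I would exhibit the conjugacy. For the Jordan block $f(x_1,x_2)=(x_1+x_2+b_1,x_2+b_2)$, define $h(x_1,x_2)=(x_1+P(x_2),x_2)$ where $P$ is the quadratic polynomial solving the difference equation $P(x_2+b_2)-P(x_2)=-x_2-b_1$; then $h$ is a homeomorphism of $\rr^2$ and a direct computation gives $h\circ f\circ h^{-1}(y_1,y_2)=(y_1,y_2+b_2)$, a nonzero translation. For the diagonal normal form $f(x_1,x_2)=(x_1+1,\alpha x_2)$ with $\alpha>0$, $\alpha\neq 1$, the untwisting homeomorphism $h(x_1,x_2)=(x_1,x_2\,\alpha^{-x_1})$ yields $h\circ f\circ h^{-1}(y_1,y_2)=(y_1+1,y_2)$. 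In each case Proposition~\ref{topol. spryaj. zsyviv} closes the argument.

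The principal obstacle is the diagonal subcase with $\alpha<0$: the untwisting map above breaks down because $\alpha^{x_1}$ admits no continuous real branch on $\rr$, and any conjugacy of the product shape $h(x_1,x_2)=(x_1,\phi(x_1,x_2))$ is incompatible with the invariance $\phi(x_1+1,\alpha x_2)=\phi(x_1,x_2)$, since continuity would then force $\phi(\cdot,x_2)$ to vanish somewhere on every horizontal line and hence destroy bijectivity. The natural plan here is to first conjugate $f^2$, whose linear part $\operatorname{diag}(1,\alpha^2)$ lies in the already-treated $\alpha>0$ regime, to a translation, and then to descend a conjugacy of $f$ itself by exploiting the relation between $f$ and $f^2$. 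Building an honest homeomorphism of $\rr^2$ out of this descent is the crux of the argument.
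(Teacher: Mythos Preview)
Your strategy, case split, and conjugating homeomorphisms coincide with the paper's: reduce $A$ to real canonical form, normalize the translation vector, then untwist with the quadratic shear $\phi_3(x_1,x_2)=\bigl(x_1-\tfrac12(x_2-\tfrac12)^2,\,x_2\bigr)$ in the Jordan case (your $P$), and with $\phi_3(x_1,x_2)=(x_1,\,x_2\alpha^{-x_1})$ in the diagonal case. The paper writes this last formula for all $\alpha\in\rr\setminus\{0,1\}$ without further comment, so the difficulty you isolate for $\alpha<0$ is present in the paper's own argument as well, not something you have missed.

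Your instinct that $\alpha<0$ is the crux is correct, but the proposed rescue via $f^2$ cannot succeed, because the statement itself fails in that range. When $\alpha<0$ one has $\det A=\alpha<0$, so $f$ is an orientation-reversing homeomorphism of $\rr^2$, whereas every translation is orientation-preserving; orientation type is invariant under conjugation by an arbitrary homeomorphism, hence $f\not\stackrel{t}{\sim}g$. (Your observation that a fibred conjugacy $(x_1,x_2)\mapsto(x_1,\phi(x_1,x_2))$ would force the monotonicity of $x_2\mapsto\phi(x_1,x_2)$ to flip as $x_1\mapsto x_1+1$ is exactly the local form of this global obstruction.) Equivalently, for the model map $f(x_1,x_2)=(x_1+1,-x_2)$ the orbit space $\rr^2/\langle f\rangle$ is an open M\"obius band while $\rr^2/\langle g\rangle$ is a cylinder, and a conjugacy would induce a homeomorphism between them. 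Thus the theorem, and with it Corollary~\ref{detA-ne-0, detC-ne-0} and Proposition~\ref{osnovna--bez-neryx-tochok}, is correct as stated only under the extra hypothesis $\det A>0$; the fixed-point-free affine maps with $\det A<0$ form a separate topological-conjugacy class.
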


%%%%%%%%%----------------ЇЇ ДОВЕДЕННЯ----------------%%%%%%%%%%%
\begin{proof}
The idea of proof: for maps $f$ and $g$ we will construct a homeomorphism $h: \rr^2 \rightarrow \rr^2$ such that $g=h \circ f\circ h^{-1}$.
$$
\begin{array}{lll}

\mbox {If the map} \:f \:\mbox {satisfies conditions of this theorem, namely:}\:
$$
\left\{
\begin{array}{ll}
\nexists \: q \in \rr^2: f(q)=q,\\
\det A\neq 0,
\end{array} \right.
$$
&\qquad \qquad \qquad
\end{array}
$$
$$
\begin{array}{ll}
\mbox{then by Lemma~\ref{isnye-lambda_(A)=1}:}\:
$$
\left\{
\begin{array}{l}
\lambda^{(1)}_{A}=1,\\
\lambda^{(2)}_{A}\neq 0.
\end{array} \right.
$$
\qquad \qquad \qquad \qquad \qquad\qquad \qquad \qquad \qquad \qquad \qquad  \qquad \qquad \qquad \qquad \qquad
\end{array}
$$

We will divide the proof of this theorem into two possible cases:
\begin{enumerate}
  \item[1)] $\lambda^{(1)}_{A}=1$ and $\lambda^{(2)}_{A}=1$;
  \item[2)] $\lambda^{(1)}_{A}=1$ and $\lambda^{(2)}_{A}=\alpha$, where $\alpha \in \rr \setminus\{0,1\}$
\end{enumerate}
and construct the corresponding homeomorphisms in each of cases.

\indent

\indent

%--------------------------------------------------------------------
\textbf{Case 1: $\lambda^{(1)}_A=1$ and $\lambda^{(2)}_A=1$.}

Notice that if matrix $A$ is a unity matrix, then this theorem follows from Proposition~\ref{topol. spryaj. zsyviv}. Therefore we prove it only for a non-unity matrix.
$$
\begin{array}{cccc}

\mbox{We will show that a map}\:\, f(x)=Ax+b \:\,\mbox{such that}\:\,
$$
\left\{
\begin{array}{lllll}
\nexists \: q\in \rr^2: f(q)=q,\\
\lambda^{(1)}_{A}=1=\lambda^{(2)}_{A},\\
A\neq E
\end{array} \right.
$$
\mbox {and}
&\qquad &\qquad &\qquad
\end{array}
$$
$g(x)=x+d$, $d \in \rr^2\backslash\{0\}$ are topologically conjugate.

Construction of the corresponding homeomorphism is divided into four steps.
%------------------ 1 ) --------------------
$$
\begin{array}{cccc}

\mbox{\textbf{1.} Let} \:\:  f(x)=Ax+b \:\: \mbox {be such that}\:\:
$$
\left\{
\begin{array}{lll}
\nexists \: q\in \rr^2: f(q)=q,\\
\lambda^{(1)}_{A}=1=\lambda^{(2)}_{A},\\
A\neq E
\end{array} \right.
$$
\:\: \mbox{and homeomorphism}
\qquad
\end{array}
$$
$$
\begin{array}{cc}

\:\,\phi_1: \rr^2 \rightarrow \rr^2,\:\,  \phi_1(x)=Sx, \: \mbox {where matrix} \: S: \:
$$
\left\{ \begin{array}{ll}
\det S\neq 0, \\
SAS^{-1}={\cal R}_A, \\
\end{array}\right.
$$
\:\: \mbox {then}\:\: f_1=\phi_1 \circ f\circ \phi_1^{-1},
&\qquad \qquad \qquad \qquad
\end{array}
$$
$$
\begin{array}{cccc}

\mbox{where}\:\:f_1(x)={\cal R}_Ax+\delta=
$$
\left(\begin{array}{cc}
1  & 1 \\
0  & 1
\end{array}\right)$$
$$
\left(\begin{array}{cc}
x_1 \\
x_2 \\
\end{array}\right)
$$
+
$$
\left(\begin{array}{cc}
\delta_1 \\
\delta_2 \\
\end{array}\right),
$$
\:\: \delta=Sb, \:\, \delta_1 \in \rr, \, \delta_2 \in \rr \backslash\{0\}.
\qquad \qquad \qquad \qquad
\end{array}
$$

Conditions $\delta_1 \in \rr$, $\delta_2 \in \rr \backslash\{0\}$ follows from Proposition~\ref{ne-isnye-neryx.-toch-1,1-(a_{12})}, because the map $f_1(x)={\cal R}_Ax+\delta$ does not have any fixed point. Indeed, $f_1 \stackrel{t}{\sim} f$ and $f$ does not have any fixed point by the assumption of the theorem.

%------------------ 2 ) --------------------
$$
\begin{array}{cccc}

\mbox {\textbf{2.} The maps} \:\:  f_1(x)={\cal R}_Ax+\delta=
$$
\left(\begin{array}{cc}
1  & 1 \\
0  & 1
\end{array}\right)$$
$$
\left(\begin{array}{cc}
x_1 \\
x_2 \\
\end{array}\right)
$$
+
$$
\left(\begin{array}{cc}
\delta_1 \\
\delta_2 \\
\end{array}\right),
$$
\:\,\delta_1 \in \rr, \: \delta_2\in \rr\backslash\{0\}
\qquad \qquad
\end{array}
$$
$$
\begin{array}{cccccc}

\mbox{and} \:\: f_2(x)={\cal R}_Ax+e_2=
$$
\left(\begin{array}{cc}
1  & 1 \\
0  & 1
\end{array}\right)$$
$$
\left(\begin{array}{cc}
x_1 \\
x_2 \\
\end{array}\right)
$$
+
$$
\left(\begin{array}{cc}
0 \\
1 \\
\end{array}\right)
$$
\:\,\mbox {are topologically conjugate},
\qquad \qquad \qquad \qquad
\end{array}
$$
because there is a homeomorphism $\phi_2: \rr^2 \rightarrow \rr^2$ such that $f_2=\phi_2 \circ f_1\circ \phi_2^{-1}$, namely
$$
\begin{array}{cccc}

\phi_2
$$
\left(\begin{array}{cc}
x_1 \\
x_2 \\
\end{array}\right)
$$
=
$$
\left( \begin{array}{ll}
\frac{1}{\delta_2} & -\frac{\delta_1}{\delta_2^2} \\
0                  & \: \: \: \frac{1}{\delta_2}
\end{array}\right)
$$

$$
\left(\begin{array}{cc}
x_1 \\
x_2 \\
\end{array}\right),
$$
\:\, \delta_1 \in \rr, \:\delta_2 \in \rr\backslash\{0\}.
\end{array}
$$

%------------------ 3 ) --------------------
$$
\begin{array}{ccc}

\mbox {\textbf{3.} The maps} \:\:  f_2(x)={\cal R}_Ax+e_2=
$$
\left(\begin{array}{cc}
1  & 1 \\
0  & 1
\end{array}\right)$$
$$
\left(\begin{array}{cc}
x_1 \\
x_2 \\
\end{array}\right)
$$
+
$$
\left(\begin{array}{cc}
0 \\
1 \\
\end{array}\right)
$$
&\qquad \qquad \qquad \qquad \qquad \qquad
\end{array}
$$
$$
\begin{array}{cccc}

\mbox{and} \:\: f_3(x)=x+e_2=
$$
\left(\begin{array}{cc}
x_1 \\
x_2 \\
\end{array}\right)
$$
+
$$
\left(\begin{array}{cc}
0 \\
1 \\
\end{array}\right)
$$
\:\: \mbox {are topologically conjugate, because }
&\qquad \qquad \qquad \qquad \qquad \qquad
\end{array}
$$
there is a homeomorphism $\phi_3: \rr^2 \rightarrow \rr^2$ such that $f_3=\phi_3 \circ f_2 \circ \phi_3^{-1}$, namely
$$
\begin{array}{ccc}

\phi_3
$$
\left( \begin{array}{ll}
x_1 \\
x_2
\end{array}\right)
$$
\:=\:
$$
\left( \begin{array}{ll}
x_1-\frac{1}{2}\,(x_2-\frac{1}{2})^2 \\
\qquad \:\: x_2
\end{array}\right).
$$
&\qquad \qquad
\end{array}
$$
%------------------ 4 ) ------------------
\indent

\textbf{4.} By Proposition~\ref{topol. spryaj. zsyviv} $f_3(x)=x+e_2$ $\stackrel{t}{\sim}$ $g(x)=x+d$, $d\neq 0$. Therefore there is a homeomorphism $\psi$ such that $g=\psi \circ f_3\circ \psi^{-1}$.

%-------------ЗАГАЛЬНИЙ ГОМЕОМОРФІЗМ ---(1 а)---------------
\indent

From the steps $\textbf{1}-\textbf{4}$ it follows that the maps $f$, $g: \rr^2 \rightarrow \rr^2$, $f(x)=Ax+b$ such
$$
\begin{array}{cccc}

\mbox{that} \:\,
$$
\left\{
\begin{array}{lllll}
\nexists \: q\in \rr^2: f(q)=q,\\
\lambda^{(1)}_{A}=1=\lambda^{(2)}_{A},\\
A\neq E
\end{array} \right.
$$

\mbox{and} \:\:  g(x)=x+d, ~ d \in \rr^2 \backslash \{0\}\:\: \mbox{are topologically conjugate},
&\qquad &\qquad &\qquad
\end{array}
$$
because there is a homeomorphism $h: \rr^2 \rightarrow \rr^2$, $h(x)= \psi \circ \phi_3 \circ \phi_2 \circ \phi_1(x)$ such that $g=h \circ f\circ h^{-1}$ (where $\phi_1$, $\phi_2$, $\phi_3$, $\psi$ are the corresponding homeomorphisms from the steps $\textbf{1}-\textbf{4}$\,).

\indent

%----------------------------------------------------------------------------------------------------------------
\textbf{Case 2: $\lambda^{(1)}_A=1$ and $\lambda^{(2)}_A=\alpha$, where $\alpha \in \rr \backslash\{0,1\}$.}
$$
\begin{array}{cccc}

\mbox{We will prove that}\:\, f(x)=Ax+b \:\,\mbox{such that}\:\,
$$
\left\{
\begin{array}{ll}
\nexists \: q\in \rr^2: f(q)=q,\\
\lambda^{(1)}_{A}=1, \:\lambda^{(2)}_{A}=\alpha, \:\mbox {where} \:\: \alpha \in \rr \backslash\{0,1\}
\end{array} \right.
$$
&\qquad &\qquad &\qquad
\end{array}
$$
and $g(x)=x+d$, $d \in \rr^2\backslash\{0\}$ are topologically conjugate.

The idea of the proof of the Case 2 is similar to the previous one, but the corresponding homeomorphisms are different.

We will construct a homeomorphism $h: \rr^2 \rightarrow \rr^2$ such that $g=h \circ f\circ h^{-1}$ in four steps.
%------------------ 1' ) ------------------
$$
\begin{array}{ccc}

\mbox {\textbf{1'.} Let}\:\, f(x)=Ax+b \:\, \mbox {be such that}\:\,
$$
\left\{
\begin{array}{ll}
\nexists \: q\in \rr^2: f(q)=q,\\
\lambda^{(1)}_A=1,\: \lambda^{(2)}_A=\alpha, \: \alpha \in \rr \setminus\{0,1\}
\end{array} \right.
$$
\:\mbox{and}
\qquad \quad
\end{array}
$$
$$
\begin{array}{cccc}

\phi_1: \rr^2 \rightarrow \rr^2,\:\,  \phi_1(x)=Bx, \: \mbox {where} \: B: \:
$$
\left\{
\begin{array}{ll}
\det B\neq 0, \\
BAB^{-1}={\cal R}_A, \\
\end{array}\right.
$$
\:\, \mbox {then}\:\, f_1=\phi_1 \circ f\circ \phi_1^{-1}, \:\, \mbox {where}\:\,
\qquad
\end{array}
$$
$$
\begin{array}{ccc}

f_1(x)={\cal R}_Ax+\delta=
$$
\left(\begin{array}{cc}
1  & 0      \\
0  & \alpha \\
\end{array}\right)$$
$$
\left(\begin{array}{cc}
x_1 \\
x_2 \\
\end{array}\right)
$$
+
$$
\left(\begin{array}{cc}
\delta_1 \\
\delta_2 \\
\end{array}\right),
$$
\:\, \alpha \in \rr \backslash\{0,1\}, \:\delta=Bb,\: \delta_1 \in \rr \backslash\{0\},\: \delta_2 \in \rr.
\qquad \qquad \qquad \qquad
\end{array}
$$

Conditions $\delta_1 \in \rr \backslash\{0\}$, $\delta_2 \in \rr$ follows from Proposition~\ref{ne-isnye-neryx.-toch-1,d}, because the map $f_1(x)={\cal R}_Ax+\delta$ does not have any fixed point ($f_1 \stackrel{t}{\sim} f$).

%------------------ 2' ) ------------------
$$
\begin{array}{cccc}

\mbox {\textbf{2'.} The maps} \:\:  f_1(x)={\cal R}_Ax+\delta=
$$
\left(\begin{array}{cc}
1  & 0      \\
0  & \alpha \\
\end{array}\right)$$
$$
\left(\begin{array}{cc}
x_1 \\
x_2 \\
\end{array}\right)
$$
+
$$
\left(\begin{array}{cc}
\delta_1 \\
\delta_2 \\
\end{array}\right)
$$
\:\, \mbox {and}
&\qquad &\qquad \qquad \qquad
\end{array}
$$
$$
\begin{array}{cccc}

f_2(x)={\cal R}_Ax+e_1=
$$
\left(\begin{array}{cc}
1  & 0      \\
0  & \alpha \\
\end{array}\right)$$
$$
\left(\begin{array}{cc}
x_1 \\
x_2 \\
\end{array}\right)
$$
+
$$
\left(\begin{array}{cc}
1 \\
0 \\
\end{array}\right),
$$
\:\, \alpha \in \rr \backslash\{0,1\},\: \delta_1 \in \rr \backslash\{0\},\: \delta_2 \in \rr
\qquad \qquad
\end{array}
$$
are topologically conjugate, because there is a homeomorphism $\phi_2: \rr^2 \rightarrow \rr^2$ such that $f_2=\phi_2 \circ f_1\circ \phi_2^{-1}$, namely
$$
\begin{array}{cc}

\phi_2
$$
\left( \begin{array}{ll}
x_1 \\
x_2
\end{array}\right)
$$
=
$$
\left( \begin{array}{ll}
\quad  \frac{x_1}{\delta_1} \\
x_2+\frac{\delta_2}{\alpha -1}
\end{array}\right),
$$
\:\, \alpha \in \rr \backslash\{0,1\},\, \delta_1 \in \rr \backslash\{0\},\, \delta_2 \in \rr.
\end{array}
$$

%---------------------------- 3' ) --------------------------------------
$$
\begin{array}{cccc}

\mbox {\textbf{3'.} The maps} \:\:  f_2(x)={\cal R}_Ax+e_1=
$$
\left(\begin{array}{cc}
1  & 0      \\
0  & \alpha \\
\end{array}\right)$$
$$
\left(\begin{array}{cc}
x_1 \\
x_2 \\
\end{array}\right)
$$
+
$$
\left(\begin{array}{cc}
1 \\
0 \\
\end{array}\right),
$$
\:\, \alpha \in \rr \backslash\{0,1\}
\qquad \qquad \qquad
\end{array}
$$
$$
\begin{array}{cccc}

\mbox {and}\:\, f_3(x)=x+e_1=
$$
\left(\begin{array}{cc}
x_1 \\
x_2 \\
\end{array}\right)
$$
+
$$
\left(\begin{array}{cc}
1 \\
0 \\
\end{array}\right)
$$
\:\, \mbox {are topologically conjugate, because there is}
\qquad \qquad \qquad \qquad
\end{array}
$$
a homeomorphism $\phi_3: \rr^2 \rightarrow \rr^2$ such that $f_3=\phi_3 \circ f_2\circ \phi_3^{-1}$, namely
$$
\begin{array}{cccc}

\phi_3
$$
\left( \begin{array}{ll}
x_1 \\
x_2
\end{array}\right)
$$
=
$$
\left( \begin{array}{ll}
\quad  x_1 \\
x_2 \alpha^{- x_1}
\end{array}\right),
$$
\:\: \alpha \in \rr \backslash\{0,1\}.
\qquad & \qquad
\end{array}
$$

%------------------ 4' ) ------------------

\textbf{4'.} By Proposition~\ref{topol. spryaj. zsyviv} $f_3(x)=x+e_1$ $\stackrel{t}{\sim}$ $g(x)=x+d$, $d\neq 0$. Therefore there is a homeomorphism $\psi$ such that $g=\psi \circ f_3\circ \psi^{-1}$.

%-------------ЗАГАЛЬНИЙ ГОМЕОМОРФІЗМ ------------------

\indent

From the steps $\textbf{1'}-\textbf{4'}$ it follows that the maps $f,\, g: \rr^2 \rightarrow \rr^2$, $f(x)=Ax+b$ such
$$
\begin{array}{cccc}

\mbox{that} \:\,
$$
\left\{
\begin{array}{ll}
\nexists \: q\in \rr^2\:: f(q)=q,\\
\lambda^{(1)}_{A}=1, \:\lambda^{(2)}_{A}=\alpha, \: \mbox {where} \:\, \alpha \in \rr \backslash\{0,1\}
\end{array} \right.
$$
\mbox {and} \:\:  g(x)=x+d,\:\, d \in \rr^2 \backslash \{0\} \:\: \mbox {are}
&\qquad &\qquad &\qquad
\end{array}
$$
topologically conjugate, because there is a homeomorphism $h: \rr^2 \rightarrow \rr^2$, $h(x)= \psi \circ \phi_3 \circ \phi_2 \circ \phi_1(x)$ such that $g=h \circ f\circ h^{-1}$ (where $\phi_1$, $\phi_2$, $\phi_3$, $\psi$ are the corresponding homeomorphisms from the steps  $\textbf{1'}-\textbf{4'}$\,).

\indent

\textbf{Conclusion:}
$$
\begin{array}{cccc}

\mbox {From the Case 1 it follows that} \: f(x)=Ax+b \:\: \mbox {such that}\:
$$
\left\{\begin{array}{ll}
\nexists \: q \in \rr^2: f(q)=q,\\
\lambda^{(1)}_{A}=\lambda^{(2)}_{A}=1
\end{array}\right.
$$
\qquad \qquad
\end{array}
$$
and $g(x)=x+d$, $d \in \rr^2\backslash\{0\}$\, are topologically conjugate.
$$
\begin{array}{cc}

\mbox {From the Case 2 it follows that} \: f(x)=Ax+b \:\, \mbox {such that}\,
$$
\left\{\begin{array}{ll}
\nexists \: q \in \rr^2: f(q)=q, \\
\lambda^{(1)}_{A}=1, \,\lambda^{(2)}_{A}=\alpha, \,\alpha \in \rr \backslash\{0,1\}
\end{array}\right.
$$
\end{array}
$$
and $g(x)=x+d$, $d \in \rr^2\backslash\{0\}$\, are topologically conjugate.

$$
\begin{array}{cccc}

\mbox {Consequently, a map} \: f(x)=Ax+b \:\: \mbox {such that}\:
$$
\left\{\begin{array}{ll}
\nexists \: q \in \rr^2: f(q)=q, \\
\det A\neq 0 \:( \Leftrightarrow \lambda^{(2)}_{A}\neq 0)
\end{array}\right.
$$
\qquad \qquad \qquad \:
\end{array}
$$
and $g(x)=x+d$, $d\in \rr^2\backslash\{0\}$\, are topologically conjugate.

\end{proof}

%%%%%%%%%----------------НАСЛІДОК----------------%%%%%%%%%%%
\begin{nas}\cite{st-2}\label{detA-ne-0, detC-ne-0}
Let $f$, $g: \rr^2 \rightarrow \rr^2$, $f(x)=Ax+b$, $g(x)=Cx+d$ be affine maps such that:
\begin{enumerate}
  \item[1)] $f$ and $g$ have no fixed points;
  \item[2)] $\det A\neq0$ and $\det C\neq0$.
\end{enumerate}
Then $f\stackrel{t}{\sim}g$.
\end{nas}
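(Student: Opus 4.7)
The plan is to observe that this corollary follows immediately from the preceding Theorem~\ref{detA-ne-0+topol. spryaj. z syvom} together with the fact that topological conjugacy is an equivalence relation.

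First I would fix a reference translation, for instance $g_0: \rr^2 \to \rr^2$, $g_0(x) = x + e_1$, where $e_1 = (1,0)^T \in \rr^2 \setminus \{0\}$. Since $f(x) = Ax + b$ has no fixed points and satisfies $\det A \neq 0$, Theorem~\ref{detA-ne-0+topol. spryaj. z syvom} applies directly with $d = e_1$, yielding a homeomorphism $h_1: \rr^2 \to \rr^2$ with $g_0 = h_1 \circ f \circ h_1^{-1}$, i.e. $f \stackrel{t}{\sim} g_0$. By exactly the same argument applied to $g(x) = Cx + d$, we obtain a homeomorphism $h_2: \rr^2 \to \rr^2$ with $g_0 = h_2 \circ g \circ h_2^{-1}$, so $g \stackrel{t}{\sim} g_0$.

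Finally, combining these we set $h = h_2^{-1} \circ h_1$, which is a homeomorphism of $\rr^2$ satisfying $g = h \circ f \circ h^{-1}$, hence $f \stackrel{t}{\sim} g$. Alternatively, one could simply cite transitivity of $\stackrel{t}{\sim}$.

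There is no real obstacle here: the substantive work is already packaged inside Theorem~\ref{detA-ne-0+topol. spryaj. z syvom}, whose proof treats the two possible Jordan forms of $A$ (the case $\lambda^{(1)}_A = \lambda^{(2)}_A = 1$ and the case of distinct eigenvalues $1$ and $\alpha \neq 0, 1$). The corollary is only a symmetric reformulation, using that each of $f$ and $g$ is individually conjugate to the same nontrivial translation.
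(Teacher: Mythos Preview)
Your proof is correct and matches the paper's approach: the paper states this as an immediate corollary of Theorem~\ref{detA-ne-0+topol. spryaj. z syvom} without further argument, and your reasoning (conjugate both $f$ and $g$ to a common nontrivial translation, then use transitivity of $\stackrel{t}{\sim}$) is exactly the intended deduction.
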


\indent

%-----------------------------------------------------------------
\textbf{b) Topological classification of non-bijective affine maps, which don't have fixed points.}

The following Theorem~\ref{detA=0, detC=0} asserts that two non-bijective affine maps, which don't have fixed points are topologically conjugate.

\begin{theorem}\cite{st-2}\label{detA=0, detC=0}
Let $f$, $g: \rr^2 \rightarrow \rr^2$, $f(x)=Ax+b$, $g(x)=Cx+d$ be affine maps such that:
\begin{enumerate}
  \item[1)] $f$ and $g$ have no fixed points;
  \item[2)] $\det A=0$ and $\det C=0$.
\end{enumerate}
Then $f\stackrel{t}{\sim}g$.
\end{theorem}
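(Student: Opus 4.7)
The plan is to show that both $f$ and $g$ are topologically conjugate to a single canonical map $f_0(x_1,x_2)=(x_1+1,\,0)$, after which the result follows by transitivity of $\stackrel{t}{\sim}$. The overall pattern mimics the proof of Theorem~\ref{detA-ne-0+topol. spryaj. z syvom}, but with the key simplification that the range of each such map is one-dimensional, which eliminates the need for the clever non-linear homeomorphisms used there.

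First, I would identify the possible spectra of $A$. By Lemma~\ref{isnye-lambda_(A)=1}, the absence of a fixed point forces one eigenvalue of $A$ to equal $1$. Combined with $\det A = 0$, the other eigenvalue is $0$. Since these eigenvalues are distinct, $A$ is diagonalizable over $\rr$, i.e.\ there exists an invertible $B$ with $BAB^{-1}=\mathrm{diag}(1,0)$. Conjugating $f$ by the linear homeomorphism $\phi_1(x)=Bx$ yields
$$
f_1(x)=\begin{pmatrix}1 & 0\\ 0 & 0\end{pmatrix}\!\begin{pmatrix}x_1\\ x_2\end{pmatrix}+\begin{pmatrix}\delta_1\\ \delta_2\end{pmatrix},\qquad \delta=Bb.
$$
Because $f_1\stackrel{t}{\sim}f$ also has no fixed point, Proposition~\ref{ne-isnye-neryx.-toch-1,d} applied with $\alpha=0\in\rr\setminus\{1\}$ forces $\delta_1\neq 0$.

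Second, I would reduce $f_1$ to the canonical form by the affine homeomorphism $\phi_2(x_1,x_2)=\bigl(x_1/\delta_1,\;x_2-\delta_2\bigr)$, which is a bijection of $\rr^2$ precisely because $\delta_1\neq 0$. A direct computation (using $\phi_2^{-1}(y_1,y_2)=(\delta_1 y_1,\;y_2+\delta_2)$) gives
$$
\phi_2\circ f_1\circ\phi_2^{-1}(y_1,y_2)=(y_1+1,\,0)=f_0(y_1,y_2),
$$
so $f\stackrel{t}{\sim}f_0$ via $h=\phi_2\circ\phi_1$. Applying the identical procedure to $g$, with the corresponding linear change of coordinates and affine normalization, produces $g\stackrel{t}{\sim}f_0$. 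Transitivity then yields $f\stackrel{t}{\sim}g$.

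I do not anticipate a serious obstacle: because $\mathrm{diag}(1,0)$ already annihilates the second coordinate, the translation part $\delta_2$ is harmlessly absorbed by a coordinate shift, and the scaling of the first coordinate by $1/\delta_1$ is legitimate precisely because of Proposition~\ref{ne-isnye-neryx.-toch-1,d}. The only minor point to verify is that $\phi_2$, although affine rather than linear, is a valid topological conjugacy — but every affine bijection of $\rr^2$ is automatically a homeomorphism, so this is immediate.
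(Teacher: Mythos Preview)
Your argument is correct and follows essentially the same route as the paper: both use Lemma~\ref{isnye-lambda_(A)=1} together with $\det A=0$ to force eigenvalues $1$ and $0$, conjugate by a linear change of basis to reach $\mathrm{diag}(1,0)x+\delta$, invoke Proposition~\ref{ne-isnye-neryx.-toch-1,d} to get $\delta_1\neq 0$, and then apply the affine homeomorphism $(x_1,x_2)\mapsto(x_1/\delta_1,\,x_2-\delta_2)$ (the paper's $\phi_2$ specialized to $\eta_1=1$, $\eta_2=0$). The only cosmetic difference is that you pass through a fixed canonical form $f_0$ and use transitivity, whereas the paper conjugates the normalized form of $f$ directly to that of $g$; the underlying homeomorphisms are identical.
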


\begin{proof}
The idea of the proof is similar to the proof of the previous theorem. For maps $f$ and $g$, that satisfy the conditions of this theorem, we will construct a homeomorphism $h: \rr^2 \rightarrow \rr^2$ such that $g=h \circ f\circ h^{-1}$.

By using Lemma~\ref{isnye-lambda_(A)=1} and Proposition~$\ref{ne-isnye-neryx.-toch-1,d}$ we will prove that $f(x)=Ax+b$ and
$$
\begin{array}{cc}

g(x)=Cx+d \:\,\mbox {such that} \:\:
$$
\left\{
\begin{array}{ll}
\nexists \: \:q, \nu \in \rr^2: f(q)=q \:\: \mbox {and}\:\: g(\nu)= \nu,\\
\lambda^{(1)}_{A}=\lambda^{(1)}_{C}=1,\: \lambda^{(2)}_{A}=\lambda^{(2)}_{C}=0
\end{array} \right.
$$
\:\mbox {are topologically}
\qquad \qquad \qquad
\end{array}
$$
conjugate. Construction of the corresponding homeomorphism is divided into three steps.

%------------------ 1 ) --\det A=0----------------
$$
\begin{array}{cc}

\mbox {\textbf{1.} Let}\:\,f(x)=Ax+b \:\: \mbox{be such that}\:\:
$$
\left\{
\begin{array}{ll}
\nexists \: \:q \in \rr^2: f(q)=q,\\
\lambda^{(1)}_{A}=1,\: \lambda^{(2)}_{A}=0
\end{array} \right.
$$
\:\,\mbox{and homeomorphism}
\qquad \qquad
\end{array}
$$
$$
\begin{array}{cc}

\phi_1: \rr^2 \rightarrow \rr^2,\:\,  \phi_1(x)=Sx, \: \mbox {where matrix} \: S: \:
$$
\left\{ \begin{array}{ll}
\det S\neq 0, \\
SAS^{-1}={\cal R}_A, \\
\end{array}\right.
$$
\mbox{then} \:\,f_1=\phi_1 \circ f\circ \phi_1^{-1},
\qquad \qquad
\end{array}
$$
$$
\begin{array}{cc}

\mbox{where} \:\,f_1(x)={\cal R}_Ax+\delta=
$$
\left(\begin{array}{cc}
1 & 0 \\
0 & 0 \\
\end{array}\right)$$
$$
\left(\begin{array}{cc}
x_1 \\
x_2 \\
\end{array}\right)
$$
+
$$
\left(\begin{array}{cc}
\delta_1 \\
\delta_2 \\
\end{array}\right),
$$
\:\,\delta=Sb, \: \delta_1\in \rr\backslash\{0\},\: \delta_2 \in \rr.
\qquad & \qquad
\end{array}
$$

Conditions $\delta_1\in \rr\backslash\{0\}$, $\delta_2 \in \rr$ follows from Proposition~\ref{ne-isnye-neryx.-toch-1,d}, because the map $f_1(x)={\cal R}_Ax+\delta$ does not have any fixed point ($f_1 \stackrel{t}{\sim} f$).

%------------------ 2 ) --\det A=0----------------
$$
\begin{array}{ll}

\mbox {\textbf{2.} Let}\:\, g(x)=Cx+d \:\: \mbox {be such that}\:\:
$$
\left\{
\begin{array}{ll}
\nexists \: \:\nu \in \rr^2: g(\nu)=\nu,\\
\lambda^{(1)}_{C}=1, \:\lambda^{(2)}_{C}=0
\end{array} \right.
$$
\:\:\mbox{and homeomorphism}
\qquad \quad
\end{array}
$$

$$
\begin{array}{ll}

\phi_3: \rr^2 \rightarrow \rr^2,\:\,  \phi_3(x)=Kx, \: \mbox {where matrix} \: K: \:
$$
\left\{ \begin{array}{ll}
\det K\neq 0, \\
K^{-1}CK={\cal R}_C={\cal R}_A, \\
\end{array}\right.
$$
\mbox{then}\:\, f_2=\phi_3^{-1} \circ g\circ \phi_3\,,
\qquad
\end{array}
$$
$$
\begin{array}{ll}

\mbox {where}\:\, f_2(x)={\cal R}_Ax+\eta=
$$
\left(\begin{array}{ll}
1 & 0 \\
0 & 0 \\
\end{array}\right)$$
$$
\left(\begin{array}{ll}
x_1 \\
x_2 \\
\end{array}\right)
$$
+
$$
\left(\begin{array}{ll}
\eta_1 \\
\eta_2 \\
\end{array}\right),
$$
\:\,\eta=K^{-1}d,\: \eta_1\in \rr\backslash\{0\},\: \eta_2 \in \rr.
\qquad \qquad \qquad \qquad
\end{array}
$$

Conditions $\eta_1\in \rr\backslash\{0\}$, $\eta_2 \in \rr$ follows from Proposition~\ref{ne-isnye-neryx.-toch-1,d}, because the map $f_2(x)={\cal R}_Ax+\eta$ does not have any fixed point ($f_2 \stackrel{t}{\sim} g$).

%------------------ 3 ) -- \det A=0------------------
$$
\begin{array}{cccc}

\mbox {\textbf{3.} The maps} \:\:  f_1(x)={\cal R}_Ax+\delta=
$$
\left(\begin{array}{cc}
1 & 0 \\
0 & 0 \\
\end{array}\right)$$
$$
\left(\begin{array}{cc}
x_1 \\
x_2 \\
\end{array}\right)
$$
+
$$
\left(\begin{array}{cccc}
\delta_1 \\
\delta_2 \\
\end{array}\right),
$$
\:\, \delta_1\in \rr\backslash\{0\},\: \delta_2 \in \rr
\qquad \qquad
\end{array}
$$
$$
\begin{array}{cc}

\mbox {and} \:\,f_2(x)={\cal R}_Ax+\eta=
$$
\left(\begin{array}{cc}
1 & 0 \\
0 & 0
\end{array}\right)
$$

$$
\left(\begin{array}{cc}
x_1 \\
x_2
\end{array}\right)
$$
+
$$
\left(\begin{array}{cc}
\eta_1 \\
\eta_2
\end{array}\right),
$$
\:\, \eta_1\in \rr\backslash\{0\},\: \eta_2 \in \rr
\qquad \qquad \qquad \qquad
\end{array}
$$
are topologically conjugate, because there is a homeomorphism $\phi_2: \rr^2 \rightarrow \rr^2$ such that $f_2=\phi_2 \circ f_1\circ \phi_2^{-1}$, namely
$$
\begin{array}{cccc}

\phi_2
$$
\left( \begin{array}{ll}
x_1 \\
x_2
\end{array}\right)
$$
=
$$
\left( \begin{array}{ll}
\quad \: \frac{\eta_1}{\delta_1}x_1 \\
x_2-\delta_2+\eta_2
\end{array}\right),
$$
\:\, \delta_1, \,\eta_1\in \rr\backslash\{0\},\: \delta_2,\,\eta_2 \in \rr.
\end{array}
$$

\indent

%---------------------------------------------------------------------------------------------------
From the steps $\textbf{1}-\textbf{3}$ it follows that $f$, $g: \rr^2 \rightarrow \rr^2$, $f(x)=Ax+b$ and $g(x)=Cx+d$
$$
\begin{array}{lllllll}

\mbox {such that} \:\,
$$
\left\{
\begin{array}{ll}
\nexists \: \:q, \nu \in \rr^2: f(q)=q \:\: \mbox {and}\:\: g(\nu)= \nu,\\
\lambda^{(1)}_{A}=\lambda^{(1)}_{C}=1, \:\lambda^{(2)}_{A}=\lambda^{(2)}_{C}=0
\end{array} \right.
$$
\:\mbox {are topologically conjugate}, \:\mbox {because}
&\qquad &\qquad &\qquad &\qquad &\qquad
\end{array}
$$
there is a homeomorphism $h: \rr^2 \rightarrow \rr^2$, $h(x)= \phi_3 \circ \phi_2 \circ \phi_1(x)$ such that $g=h \circ f\circ h^{-1}$,
(where $\phi_1$, $\phi_2$, $\phi_3$ are the corresponding homeomorphisms from the steps $\textbf{1}-\textbf{3}$\,).
\end{proof}

\indent

Consequently, by Corollary~\ref{detA-ne-0, detC-ne-0} and Theorem~\ref{detA=0, detC=0} for affine maps from $\rr^2$ to $\rr^2$, which have no fixed points we get the next proposition.

\begin{prop}\cite{st-2}\label{osnovna--bez-neryx-tochok}
Let $f$, $g: \rr^2 \rightarrow \rr^2$, $f(x)=Ax+b$, $g(x)=Cx+d$ be affine maps, which have no fixed points.

Then $f\stackrel{t}{\sim}g$ if and only if \,$\det A$ and $\det C$ are either simultaneously equal to 0 or simultaneously different from 0.
\end{prop}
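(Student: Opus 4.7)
My plan is to assemble Proposition~\ref{osnovna--bez-neryx-tochok} directly from Corollary~\ref{detA-ne-0, detC-ne-0} and Theorem~\ref{detA=0, detC=0}, treating the ``simultaneously nonzero'' and ``simultaneously zero'' branches of sufficiency separately, and then handling necessity by a clean invariance argument.

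For sufficiency, I would split on the common value of $\det A$ and $\det C$. If $\det A \neq 0$ and $\det C \neq 0$, then $f$ and $g$ satisfy the hypotheses of Corollary~\ref{detA-ne-0, detC-ne-0}, so $f \stackrel{t}{\sim} g$. If $\det A = 0$ and $\det C = 0$, then the same pair falls under Theorem~\ref{detA=0, detC=0}, giving $f \stackrel{t}{\sim} g$. There is nothing further to compute; the homeomorphisms have already been constructed in those two results.

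For necessity, the point I would emphasize is that topological conjugacy preserves the property of being a bijection: if $g = h \circ f \circ h^{-1}$ for a homeomorphism $h$, then $f$ is a bijection of $\rr^2$ if and only if $g$ is. Since an affine map $x \mapsto Ax+b$ on $\rr^2$ is a bijection exactly when its linear part is invertible, that is, exactly when $\det A \neq 0$, we conclude that $\det A \neq 0$ if and only if $\det C \neq 0$. Equivalently, $\det A$ and $\det C$ are either simultaneously zero or simultaneously nonzero.

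I do not expect any genuine obstacle here, since the two deep inputs (Corollary~\ref{detA-ne-0, detC-ne-0} and Theorem~\ref{detA=0, detC=0}) already carry the whole construction of the conjugating homeomorphisms. The only care needed is to note that the dichotomy ``bijective vs.\ non-bijective'' exhausts all affine maps on $\rr^2$ without fixed points, so the two sufficiency cases together cover every possibility allowed by the ``simultaneously 0 / simultaneously $\neq 0$'' condition.
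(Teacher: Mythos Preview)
Your proposal is correct and matches the paper's approach exactly: the paper states Proposition~\ref{osnovna--bez-neryx-tochok} as an immediate consequence of Corollary~\ref{detA-ne-0, detC-ne-0} and Theorem~\ref{detA=0, detC=0}, and the necessity direction is precisely the observation (made just before those results) that topologically conjugate maps are simultaneously bijective or simultaneously non-bijective.
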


\indent

The following theorem is a conclusion of Propositions~\ref{klas-affin-vidob-2-2} and \ref{osnovna--bez-neryx-tochok}.

\begin{theorem}\cite{st-2}\:\,[Main theorem]\label{osnovna-R-2}
Let $f$, $g: \rr^2 \rightarrow \rr^2$, $f(x)=Ax+b$, $g(x)=Cx+d$ be affine maps.

If each of $f$ and $g$ has at least one fixed point, then $f\stackrel{t}{\sim}g$ if and only if
$$
\left\{
\begin{array}{ll}
\rank(A_+)=\rank(C_+), \: \sign(\,\det(A_+)\,)=\sign(\,\det(C_+)\,),\\
\rank(A_-)=\rank(C_-), \: \sign(\,\det(A_-)\,)=\sign(\,\det(C_-)\,), \\
A_\infty = C_\infty, \\
A_0 = C_0.
\end{array} \right.
$$

If $f$ and $g$ have no fixed points, then $f\stackrel{t}{\sim}g$ if and only if \,$\det A$ and $\det C$ are either simultaneously equal to 0 or simultaneously different from 0.
\end{theorem}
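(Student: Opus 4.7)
The plan is to observe that the main theorem is essentially a repackaging of the two propositions established immediately above it (Propositions~\ref{klas-affin-vidob-2-2} and \ref{osnovna--bez-neryx-tochok}), combined with the basic observation that the number of fixed points is a topological invariant. Hence there is almost nothing new to prove.

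First I would handle the two cases separately, matching the statement. For the first case (each of $f$ and $g$ has at least one fixed point), I would directly invoke Proposition~\ref{klas-affin-vidob-2-2}, which already supplies the required equivalence between $f\stackrel{t}{\sim}g$ and the block conditions on $A_{+},A_{-},A_\infty,A_0$ versus $C_{+},C_{-},C_\infty,C_0$. For the second case (neither $f$ nor $g$ has a fixed point), I would directly invoke Proposition~\ref{osnovna--bez-neryx-tochok}, which asserts exactly that $f\stackrel{t}{\sim}g$ iff $\det A$ and $\det C$ are either both zero or both nonzero.

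The only minor item to check is that the hypothesis dichotomy (``both have a fixed point'' vs ``neither has a fixed point'') is genuinely exhaustive for pairs $(f,g)$ that one would ever want to test for conjugacy: this follows because topologically conjugate maps have bijectively corresponding fixed-point sets (if $g=h\circ f\circ h^{-1}$ then $x$ is fixed by $f$ iff $h(x)$ is fixed by $g$), so in the mixed situation $f\stackrel{t}{\sim}g$ is automatically false and both sides of the asserted equivalence fail vacuously. Thus the two cases together cover every situation.

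I do not anticipate any obstacle: once Proposition~\ref{klas-affin-vidob-2-2} (which itself follows from Theorem~\ref{pro-neryx-tochk} and Proposition~\ref{klas-vsix-liniu-vidob-5-5}) and Proposition~\ref{osnovna--bez-neryx-tochok} (which itself packages Corollary~\ref{detA-ne-0, detC-ne-0} and Theorem~\ref{detA=0, detC=0}) are in hand, the main theorem is a one-line consequence, and my write-up would simply cite these results for each of the two bulleted cases.
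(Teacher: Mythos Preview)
Your proposal is correct and matches the paper's own treatment exactly: the paper presents Theorem~\ref{osnovna-R-2} simply as ``a conclusion of Propositions~\ref{klas-affin-vidob-2-2} and \ref{osnovna--bez-neryx-tochok}'' with no further argument. Your extra remark about the mixed case (one map with a fixed point, the other without) is a harmless addition, since the theorem as stated makes no claim in that situation.
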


%%%%%%%%%%%%%%%%%%%%%%%%%%%%%%%%%%%%%%%%%%%%%%%%%%%%%%%%%%%%%%%%%%%%%%%%%%%%%%%%%%

\indent

\section{Topological classification of affine maps from $\cc^n$ to $\cc^n$, $n\leq 2$.}

The problem of classification up to topological conjugacy of affine maps from $\cc^n$ to $\cc^n$, $n\geq 1$ is still open, but we obtained the classification of affine maps from $\cc^n$ to $\cc^n$, $n=1,\, 2$.

\subsection{Topological classification of affine maps from $\cc$ to $\cc$.}

\subsubsection{Classification of linear maps from $\cc$ to $\cc$.}

%The topological classification of linear maps from $\cc$ to $\cc$ follows from the topological classification of linear maps from $\rr^2$ to $\rr^2$.   %and states as follows:

\begin{lemma}\cite{st-3}\label{klas-liniu-vidob-C}
Let $f$, $g: \cc \rightarrow \cc$, $f(z)=az$, $g(z)=cz$, $a$, $c \in\cc$ be linear maps.
$$
\begin{array}{ll}

\mbox{Then}\:\, f \stackrel{t}{\sim}g \:\, \mbox{if and only if} \:\: \mbox{or}\:\,
$$
\left\{
\begin{array}{cc}
0<|a|<1,  \\
0<|c|<1,
\end{array}\right.
$$
\,\mbox{or}\:\,
$$
\left\{
\begin{array}{cc}
|a|>1,  \\
|c|>1,
\end{array}\right.
$$
\,\mbox{or}\:\: a=c, \:\,\mbox{or}\:\: a=\bar{c}.
\qquad \qquad \qquad \qquad \qquad
\end{array}
$$
(That is when  $|a|$, $|c|$ are or simultaneously smaller than 1 and different from 0, or simultaneously greater than~1, or $a=c$, or  $a=\bar{c}$).
\end{lemma}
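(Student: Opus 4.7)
The plan is to reduce the classification to the real linear case already handled in Proposition~\ref{klas-vsix-liniu-vidob-5-5}. Via the identification $\cc \cong \rr^2$, the complex linear map $f(z)=az$ becomes a real linear map $\rr^2 \to \rr^2$ with matrix
$$M_a = \begin{pmatrix} \Re a & -\Im a \\ \Im a & \;\;\Re a \end{pmatrix},$$
whose (complex) eigenvalues are $a$ and $\bar a$, both of modulus $|a|$. Since homeomorphisms of $\cc$ correspond bijectively to homeomorphisms of $\rr^2$, the topological conjugacy class of $f$ coincides with that of the associated real linear map, and because $n=2 \leq 5$, Proposition~\ref{klas-vsix-liniu-vidob-5-5} applies with no restriction on roots of unity.

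I would then partition into the four regimes for $|a|$: $a=0$, $0<|a|<1$, $|a|=1$, and $|a|>1$. In each regime exactly one of the RCF blocks of $M_a$ is non-empty, namely $(M_a)_\infty$, $(M_a)_+$, $(M_a)_0$, and $(M_a)_-$ respectively; when $a \neq 0$ this block has rank $2$ and positive determinant $|a|^2$. Pairing up the regimes for $|a|$ and $|c|$: whenever they differ, the dimensions or ranks of the corresponding blocks do not match, so Proposition~\ref{klas-vsix-liniu-vidob-5-5} denies conjugacy. When both $|a|,|c|$ lie in $(0,1)$, or both exceed $1$, the four RCF conditions reduce to trivial equalities and conjugacy follows, yielding the first two clauses of the lemma. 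When $a=0$, the condition $(M_a)_\infty=(M_c)_\infty$ forces $c=0=a$, absorbed into the clause $a=c$.

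The main delicate step is the case $|a|=|c|=1$, where the criterion $A_0 = C_0$ (equality of RCFs up to order of diagonal blocks) must be translated into a relation between $a$ and $c$. For $a=e^{i\theta}$ with $\theta \in (0,\pi)$, the matrix $M_a$ is already the standard $2 \times 2$ real rotation block and thus coincides with its own RCF; for $a=\pm 1$ it equals $\pm I$. The key observation is that $M_a$ and $M_{\bar a}$ are similar over $\rr$ via $R=\mathrm{diag}(1,-1)$, since $RM_a R^{-1}=M_{\bar a}$, so they share the same RCF. Conversely, distinct angles in $[0,\pi]$ produce distinct rotation blocks, and $I$, $-I$, and every rotation block are mutually distinguished. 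Hence $(M_a)_0=(M_c)_0$ holds precisely when $c=a$ or $c=\bar a$, giving the third and fourth clauses.

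Assembling the outcomes of all regimes produces the claimed disjunction. The only real content beyond a straightforward appeal to Proposition~\ref{klas-vsix-liniu-vidob-5-5} is the RCF matching in the $|a|=|c|=1$ case, where the sign-of-angle ambiguity $a \leftrightarrow \bar a$ arises from the orthogonal similarity $RM_aR^{-1}=M_{\bar a}$; this is the point that I expect to be the main, though mild, obstacle.
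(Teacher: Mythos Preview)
Your proposal is correct and follows essentially the same approach as the paper: both identify $\cc$ with $\rr^2$, represent $z\mapsto az$ by the matrix $M_a=\left(\begin{smallmatrix}\Re a&-\Im a\\ \Im a&\Re a\end{smallmatrix}\right)$ with eigenvalues $a,\bar a$, invoke Proposition~\ref{klas-vsix-liniu-vidob-5-5}, and handle the unit-modulus case via the conjugation $RM_aR^{-1}=M_{\bar a}$ with $R=\mathrm{diag}(1,-1)$. The paper organizes the verification as a list of five properties of $M_a$ rather than an explicit case split on $|a|$, but the content is identical.
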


\begin{proof}

The maps $f$, $g: \cc \rightarrow \cc$, $f(z)=az$, $g(z)=cz$, $a=a_1+ia_2$, $c=c_1+ic_2 \in\cc$ are topologically conjugate if and only if the corresponding linear maps from $\rr^2$ to $\rr^2$ are topologically conjugate, that is when
$$
\begin{array}{ll}

f_{\rr^2}(x)=Ax=
$$
\left(
\begin{array}{cc}
a_1 & -a_2   \\
a_2 & a_1
\end{array}\right)
$$
$$
\left(
\begin{array}{cc}
x_1 \\
x_2
\end{array}\right),
$$
\:\,\mbox{where} ~ a_1, ~ a_2 \in \rr
\:\,\mbox{and} \:\, \lambda^{(1)}_{A}=a,\, \lambda^{(2)}_{A}=\bar{a}
\qquad \qquad \qquad \qquad
\end{array}
$$
$$
\begin{array}{ll}

\mbox{and}~ g_{\rr^2}(x)=Cx=
$$
\left(
\begin{array}{cc}
c_1 & -c_2   \\
c_2 & c_1
\end{array}\right)
$$
$$
\left(
\begin{array}{cc}
x_1 \\
x_2
\end{array}\right),
$$
\:\, \mbox{where}~ c_1, ~ c_2 \in \rr
\:\,\mbox{and} \:\, \lambda^{(1)}_C=c,\, \lambda^{(2)}_C=\bar{c}
\qquad \qquad \qquad \qquad
\end{array}
$$
are topologically conjugate.

By Proposition~\ref{klas-vsix-liniu-vidob-5-5} $f_{\rr^2}(x)=Ax$ and $g_{\rr^2}(x)=Cx$ are topologically conjugate
$$
\begin{array}{ll}

\mbox{if and only if} \:\:
$$
\left\{
\begin{array}{ll}
\rank(A_+)=\rank(C_+), \: \sign(\,\det(A_+)\,)=\sign(\,\det(C_+)\,),\\
\rank(A_-)=\rank(C_-), \: \sign(\,\det(A_-)\,)=\sign(\,\det(C_-)\,), \\
A_\infty = C_\infty, \\
A_0 = C_0.
\end{array} \right.
$$

\qquad \qquad \qquad \qquad \qquad
\end{array}
$$

Notice, that the RCF's of the matrices $A$ and $C$ coincide with the matrices $A$ and $C$, respectively, and these matrices have the following properties:

1) $\rank(A)=2=\rank(C)$ for all non-zero matrices $A$ and $C$;

2) $\sign(\,\det(A)\,)=\sign\,(\,a_1^2+a_2^2\,)=\sign\,(\,c_1^2+c_2^2\,)=\sign(\,\det(C)\,)$ for all non-zero matrices $A$ and $C$;

3) $|\lambda^{(1)}_A|=|\lambda^{(2)}_A|=|a|$, $|\lambda^{(1)}_C|=|\lambda^{(2)}_C|=|c|$;

4) $a=c=0$ if and only if $A$ and $C$ are zero matrices;

5) if $|a|=|c|=1$, then $f_{\rr^2}(x)=Ax$ $\stackrel{t}{\sim}$ $g_{\rr^2}(x)=Cx$ if and only if either $a=c$, or $a=\bar{c}$.

The proof of 5-th property follows from Proposition~\ref{klas-vsix-liniu-vidob-5-5}, because if $|a|=|c|=1$ then $f_{\rr^2}(x)=Ax$ $\stackrel{t}{\sim}$ $g_{\rr^2}(x)=Cx$ if and only if $A_{0}=C_{0}$. From the equality of these matrices follows that their eigenvalues are identical, therefore $\{\lambda^{(1)}_{A}, \lambda^{(2)}_{A}\}$=$\{\lambda^{(1)}_{C}, \lambda^{(2)}_{C}\}$. As $\{\lambda^{(1)}_{A}, \lambda^{(2)}_{A}\}$=$\{a, \bar{a}\}$ and  $\{\lambda^{(1)}_{C}, \lambda^{(2)}_{C}\}$=$\{c, \bar{c}\}$, it follows that either $a=c$, or $a=\bar{c}$.
$$
\begin{array}{ll}

\mbox{\quad Conversely, if}\:\, a=c,\:\, \mbox{then}\:\, A=C; \:\: \mbox{if}\:\,  a=\bar{c}, \:\,\mbox{then}\:\, f_{\rr^2}(x)=Ax=
$$
\left(
\begin{array}{cc}
a_1 & -a_2   \\
a_2 & a_1
\end{array}\right)
$$
$$
\left(
\begin{array}{cc}
x_1 \\
x_2
\end{array}\right)
$$
\qquad \qquad \qquad \qquad
\end{array}
$$
$$
\begin{array}{ll}
\mbox{and}~ g_{\rr^2}(x)=Cx=
$$
\left(
\begin{array}{cc}
a_1  & a_2   \\
-a_2 & a_1
\end{array}\right)
$$
$$
\left(
\begin{array}{cc}
x_1 \\
x_2
\end{array}\right)
$$
\:\,\mbox{are topologically conjugate, because there is}
\qquad \qquad \qquad \qquad
\end{array}
$$
$$
\begin{array}{ll}

\mbox{a homeomorphism}\:\,
h: \rr^2 \rightarrow \rr^2,  \:\,h
$$
\left(
\begin{array}{cc}
x_1 \\
x_2
\end{array}\right)
$$
=
$$
\left(
\begin{array}{cc}
1 & 0   \\
0 & -1
\end{array}\right)
$$
$$
\left(
\begin{array}{cc}
x_1 \\
x_2
\end{array}\right),
$$
\:\, \mbox{such that}\:\, f=h\circ g\circ h^{-1}.
\qquad \qquad
\end{array}
$$

By the properties $1 - 5$, we obtain that the maps
$$
\begin{array}{ll}

f_{\rr^2}(x)=Ax=
$$
\left(
\begin{array}{cc}
a_1 & -a_2   \\
a_2 & a_1
\end{array}\right)
$$
$$
\left(
\begin{array}{cc}
x_1 \\
x_2
\end{array}\right),
$$
\:\,\mbox{where} ~ a_1, ~ a_2 \in \rr,
\:\, \lambda^{(1)}_{A}=a,\, \lambda^{(2)}_{A}=\bar{a}
\qquad \qquad \qquad \qquad
\end{array}
$$
$$
\begin{array}{ll}

\mbox{and}~ g_{\rr^2}(x)=Cx=
$$
\left(
\begin{array}{cc}
c_1 & -c_2   \\
c_2 & c_1
\end{array}\right)
$$
$$
\left(
\begin{array}{cc}
x_1 \\
x_2
\end{array}\right),
$$
\:\, \mbox{where}~ c_1, ~ c_2 \in \rr,
\:\, \lambda^{(1)}_C=c,\, \lambda^{(2)}_C=\bar{c}\:\: \mbox{are}
\qquad \qquad \qquad
\end{array}
$$
$$
\begin{array}{ll}

\mbox{topologically conjugate}\:\,\mbox{if and only if} \:\, \mbox{or}\:\,
$$
\left\{
\begin{array}{cc}
0<|a|<1,  \\
0<|c|<1,
\end{array}\right.
$$
\,\mbox{or}\:\,
$$
\left\{
\begin{array}{cc}
|a|>1,  \\
|c|>1,
\end{array}\right.
$$
\: \mbox{or}\:\, a=c, \:\,\mbox{or}\:\, a=\bar{c}.
\qquad \qquad \qquad \qquad \qquad
\end{array}
$$

\indent

Therefore $f$, $g: \cc \rightarrow \cc$, $f(z)=az$, $g(z)=cz$, $a$, $c \in \cc$ are topologically conjugate
$$
\begin{array}{ll}

\mbox{if and only if} \:\,\mbox{or}\:\,
$$
\left\{
\begin{array}{cc}
0<|a|<1,  \\
0<|c|<1,
\end{array}\right.
$$
\:\mbox{or}\:\,
$$
\left\{
\begin{array}{cc}
|a|>1,  \\
|c|>1,
\end{array}\right.
$$
\:\mbox{or}\:\, a=c, \:\,\mbox{or}\:\, a=\bar{c}.
\qquad \qquad \qquad \qquad \qquad
\end{array}
$$
\end{proof}

\begin{remk}
The 5-th property in the proof of Lemma~\ref{klas-liniu-vidob-C} is quite natural, because the RCF of a real entried $n\times n$ matrix is uniquely determined up to sign of non-diagonal elements and up to the order of diagonal blocks (in the cases $n>2$).
\end{remk}

%%%%%%%%%%%%%%%%%%%%%%%%%%%%%%%%%%%%%%%%%%%%%%%%%%%%%%%%%%%%%%%%%%%%%%%%%%%%%%%%%%%%

\indent
\subsubsection{Classification of affine maps from $\cc$ to $\cc$.}

\begin{theorem}\cite{st-3}\label{pro-neryx-tochk-C}
Let $f$, $g: \cc\rightarrow \cc$, $f(z)=az+b$ and $g(z)=az$, $a$, $b \in\cc$.

Then $f\stackrel{t}{\sim}g$ if and only if there is $q \in \cc$ such that $f(q)=q$.
\end{theorem}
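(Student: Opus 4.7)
The plan is to mirror the proof of Theorem~\ref{pro-neryx-tochk} line for line, since the argument there used only that translations are homeomorphisms and that conjugate maps have equally many fixed points; both facts are available equally well over $\cc$.

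For the necessity direction, I would observe that if $f \stackrel{t}{\sim} g$ via a homeomorphism $h$, then $h$ restricts to a bijection between the fixed point sets of $f$ and $g$, so in particular $f$ and $g$ have the same number of fixed points. Since $g(0) = 0$, the map $g$ has at least one fixed point; therefore $f$ must have one too, which gives the desired $q \in \cc$ with $f(q) = q$.

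For the sufficiency direction, given $q \in \cc$ with $aq + b = q$, I would set $h(z) = z + q$. This is a homeomorphism of $\cc$ with inverse $h^{-1}(z) = z - q$. A direct computation gives
\[
h \circ g \circ h^{-1}(z) = h(g(z-q)) = h(a(z-q)) = a(z-q) + q = az + (1-a)q.
\]
The fixed-point equation $aq + b = q$ rearranges to $b = (1-a)q$, so $h \circ g \circ h^{-1}(z) = az + b = f(z)$. Hence $f \stackrel{t}{\sim} g$.

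I do not expect any genuine obstacle: the argument is essentially formal and independent of the ground field, using only that the translation $z \mapsto z + q$ is a homeomorphism of $\cc$ (which is immediate in the standard topology) and the elementary observation that topological conjugacy preserves the fixed-point set. The only thing worth flagging is that, unlike in the $n \geq 2$ setting considered later, one dimension over $\cc$ offers no complication regarding the existence of fixed points of $g$, since $g(z) = az$ always fixes $0$.
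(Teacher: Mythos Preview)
Your proposal is correct and matches the paper's approach exactly: the paper's proof of Theorem~\ref{pro-neryx-tochk-C} simply reads ``Similar to the proof of Theorem~\ref{pro-neryx-tochk},'' and you have reproduced that argument over $\cc$ with the same conjugating translation $h(z)=z+q$ and the same fixed-point-count observation for the converse.
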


\begin{proof}
Similar to the proof of Theorem~\ref{pro-neryx-tochk}.
\end{proof}

The following theorem is the criterion of topological conjugacy of affine maps from $\cc$ to $\cc$.

\begin{theorem}\cite{st-3}\label{class-affin-vidobr}
Let $f$, $g: \cc \rightarrow \cc$, $f(z)=az+b$, $g(z)=cz+d$, $a$, $b$, $c$, $d \in\cc$ be affine maps.
$$
\begin{array}{ll}

\:\,\mbox{Then}\:\, f \stackrel{t}{\sim}g \:\, \mbox{if and only if} \:\,\mbox{or}\:\,
$$
\left\{
\begin{array}{cc}
0<|a|<1,  \\
0<|c|<1,
\end{array}\right.
$$
\:\mbox{or}\:\,
$$
\left\{
\begin{array}{cc}
|a|>1,  \\
|c|>1,
\end{array}\right.
$$
\:\mbox{or} \:\: a=c,\:\,\mbox{or}\:\, a=\bar{c} \,;\,\,\mbox{if}
\qquad \qquad \qquad \qquad
\end{array}
$$
$a=c=1$, then $b$ and $d$ are either simultaneously equal to 0 or simultaneously different from 0.
\end{theorem}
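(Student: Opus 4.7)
The plan is to mirror the strategy used for the real line (Proposition~\ref{klas-afin-vidob-R}): whenever both maps have fixed points, reduce them to their linear parts via Theorem~\ref{pro-neryx-tochk-C} and invoke the linear classification of Lemma~\ref{klas-liniu-vidob-C}; the only remaining case is pure translations, which is handled by Proposition~\ref{topol. spryaj. zsyviv} applied in the real guise $\cc \cong \rr^2$.

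First I would analyse fixed points. Solving $f(z) = z$ gives $(a-1)z = -b$, so $f$ has a unique fixed point when $a \neq 1$, equals the identity when $a = 1,\; b = 0$, and has no fixed point when $a = 1,\; b \neq 0$; the same trichotomy holds for $g$. Since topologically conjugate maps have the same number of fixed points, $f \stackrel{t}{\sim} g$ forces $a = 1 \Leftrightarrow c = 1$, and in the common subcase $a = c = 1$ it also forces $b = 0 \Leftrightarrow d = 0$. This already yields the necessity of the secondary clause about $a = c = 1$.

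For the principal statement I would split on whether $a = 1$. When $a \neq 1$, each of the four listed alternatives forces $c \neq 1$ as well (direct check: $a = c$ inherits $a \neq 1$, $a = \bar c$ transfers $\bar a \neq 1$, and the two modulus inequalities exclude the value $1$ outright); hence Theorem~\ref{pro-neryx-tochk-C} gives $f \stackrel{t}{\sim} f_0$ and $g \stackrel{t}{\sim} g_0$, where $f_0(z) = az$ and $g_0(z) = cz$, and Lemma~\ref{klas-liniu-vidob-C} turns $f_0 \stackrel{t}{\sim} g_0$ into exactly one of the four conditions on $(a, c)$. When instead $a = c = 1$, both $f$ and $g$ are translations of $\cc$; identifying $\cc$ with $\rr^2$, Proposition~\ref{topol. spryaj. zsyviv} characterises their conjugacy by the simultaneous vanishing (or non-vanishing) of $b$ and $d$, matching the secondary clause exactly.

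The main obstacle is not any single hard argument but the combinatorial bookkeeping between the four linear alternatives and the exceptional clause: one must verify carefully that $a \neq 1$ and $c \neq 1$ are compatible with each of the four alternatives (so the linear reduction applies symmetrically on both sides), and that the translation subcase $a = c = 1$ is correctly separated off before invoking Proposition~\ref{topol. spryaj. zsyviv}. Once this case analysis is laid out cleanly, the two nontrivial ingredients (Theorem~\ref{pro-neryx-tochk-C} and Lemma~\ref{klas-liniu-vidob-C}) furnish the rest of the argument almost mechanically.
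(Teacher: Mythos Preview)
Your proposal is correct and follows essentially the same route as the paper: split by fixed-point count, reduce the $a\neq 1$ case to linear parts via Theorem~\ref{pro-neryx-tochk-C} and apply Lemma~\ref{klas-liniu-vidob-C}, then treat the translation case $a=c=1$ separately. The only cosmetic difference is that for the translations the paper writes down the explicit conjugating homeomorphism $h(z)=\tfrac{d}{b}\,z$ directly in $\cc$, whereas you invoke Proposition~\ref{topol. spryaj. zsyviv} through the identification $\cc\cong\rr^2$; these amount to the same thing.
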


\begin{proof}

The topologically conjugate maps have the same number of fixed points, therefore we will prove this theorem for each class of affine maps in which the number of fixed points is the same.

Since an arbitrary affine map from $\cc$ to $\cc$ has either one fixed point, or infinitely many fixed points (that is the identity), or doesn't have them at all, it follows that we have three cases:

1) The affine maps such that each of them has only one fixed point.

An arbitrary affine map from $\cc$ to $\cc$, $\varphi(z)=mz+n$ has only one fixed point, (namely $z=\frac{n}{1-m}$) if and only if $m\neq1$, $n \in \cc$.

By Theorem~\ref{pro-neryx-tochk-C}: \,$f(z)=az+b \stackrel{t}{\sim} r(z)=az$, $a \in \cc \backslash\{1\}$, $b \in \cc$,

\qquad \qquad \qquad \qquad           $g(x)=cz+d \stackrel{t}{\sim} s(z)=cz$, $c \in \cc \backslash\{1\}$, $d \in \cc$.

By Lemma~\ref{klas-liniu-vidob-C}: \,$r(z)=az \stackrel{t}{\sim} s(z)=cz$, $a$, $c \in \cc \backslash\{1\}$ if and only if
$$
\begin{array}{l}

\mbox{or}\:\,
$$
\left\{
\begin{array}{cc}
0<|a|<1,  \\
0<|c|<1,
\end{array}\right.
$$
\:\,\mbox{or}\:\,
$$
\left\{
\begin{array}{cc}
|a|>1,  \\
|c|>1,
\end{array}\right.
$$
\:\,\mbox{or}\:\: a=c, \:\,\mbox{or}\:\: a=\bar{c}.
\qquad \qquad \qquad \qquad \qquad \qquad \qquad \qquad
\end{array}
$$

Consequently, $f(z)=az+b$ $\stackrel{t}{\sim}$ $g(z)=cz+d$, $a$, $c \in \cc \backslash\{1\}$, $b$,~$d \in \cc$\, if and only if
$$
\begin{array}{ll}

\mbox{or}\:\,
$$
\left\{
\begin{array}{cc}
0<|a|<1,  \\
0<|c|<1,
\end{array}\right.
$$
\:\mbox{or}\:\,
$$
\left\{
\begin{array}{cc}
|a|>1,  \\
|c|>1,
\end{array}\right.
$$
\:\,\mbox{or}\:\: a=c,\:\,\mbox{or}\:\: a=\bar{c}.
\qquad \qquad \qquad \qquad \qquad \qquad \qquad \qquad
\end{array}
$$

2) The affine maps such that each of them has infinitely many fixed points.

An arbitrary affine map from $\cc$ to $\cc$, $\varphi(z)=mz+n$ has infinitely many fixed points if and only if $m=1$ and $n=0$, that is, $\varphi(z)=id_{\cc}(z)$.

3) The affine maps, which have no fixed points.

An arbitrary affine map from $\cc$ to $\cc$, $\varphi(z)=mz+n$ has no fixed point if and only if $m=1$ and $n\neq0$.

The maps $f(z)=z+b$ and $g(z)=z+d$, $b$, $d \in \cc \backslash\{0\}$, which have no fixed points are always topologically conjugate, because there is a homeomorphism  $h: \cc \rightarrow \cc$, $h(z)=\frac{d}{b}\,z$, $b$, $d \in \cc \backslash\{0\}$ such that $h\circ f=g\circ h$.

If we combine the results of these three cases, then
$f(z)=az+b$ $\stackrel{t}{\sim}$ $g(z)=cz+d$, %$a$,~$c \in \cc \backslash\{1\}$, $b$,~$d \in \cc$\, if and only if
$$
\begin{array}{ll}

a,~c \in \cc \backslash\{1\},\:\, b,~d \in \cc \:\, \mbox{if and only if} \:\,
\mbox{or}\:\,
$$
\left\{
\begin{array}{cc}
0<|a|<1,  \\
0<|c|<1,
\end{array}\right.
$$
\:\mbox{or}\:\,
$$
\left\{
\begin{array}{cc}
|a|>1,  \\
|c|>1,
\end{array}\right.
$$
\:\,\mbox{or}\:\: a=c,\:\,\mbox{or}\:\: a=\bar{c}.
\qquad \qquad \qquad \qquad \qquad \qquad \qquad \qquad
\end{array}
$$
If $a=c=1$, then $f(z)=z+b$ $\stackrel{t}{\sim}$ $g(z)=z+d$, $b$, $d \in \cc$ if and only if $b$ and $d$ are either simultaneously equal to 0 or simultaneously different from 0.

Consequently, $f(z)=az+b$ $\stackrel{t}{\sim}$ $g(z)=cz+d$, $a$, $c$, $b$,~$d \in \cc$\, if and only if
$$
\begin{array}{lll}

\mbox{or}\:\,
$$
\left\{
\begin{array}{cc}
0<|a|<1,  \\
0<|c|<1,
\end{array}\right.
$$
\:\,\mbox{or}\:\,
$$
\left\{
\begin{array}{cc}
|a|>1,  \\
|c|>1,
\end{array}\right.
$$
\:\,\mbox{or}\:\: a=c,\:\,\mbox{or}\:\, a=\bar{c}\,;\:\, \mbox{if}\:\:a=c=1,

&\qquad \qquad \qquad \qquad \qquad
\end{array}
$$
then $b$ and $d$ are either simultaneously equal to 0 or simultaneously different from 0.

\end{proof}

%%%%%%%%%%%%%%%%%%%%%%%%%%%%%%%%%%%%%%%%%%%%%%%%%%%%%%%%%%%%%%%%%%%%%%%%%%%%%%%%%%
\indent
\subsection{Topological classification of affine maps from $\cc^2$ to $\cc^2$.}

\subsubsection{Classification of linear maps from $\cc^2$ to $\cc^2$.}

Denote by $J_k (\lambda)$ the $k \times k$ \emph{Jordan block} with eigenvalue $\lambda$:
$$
J_k (\lambda)=
\left(
\begin{array}{lllll}
\lambda & 1       &            & \, 0  \\
        & \ddots  & \ddots     &       \\
        &         & \lambda    & \,1   \\
0       &         &            & \lambda
\end{array} \right).
$$
The number $k$ is called the \emph{size of $J_k (\lambda)$}.

For each square complex matrix $A$, Jordan canonical form $J_A$ is a direct sum of Jordan blocks.

\begin{ozn}
We say that two Jordan canonical forms
\[
J_A=J_{k_1}(\lambda_1)\oplus \ldots \oplus J_{k_t} (\lambda_t) \:\, \mbox{and}\:\,
J_C=J_{k_1}(\mu_1)\oplus \ldots \oplus J_{k_t} (\mu_t)
\]
are \textbf{*equal} (and write $J_A \stackrel{\ast}{=} J_C$)\, if $\mu_i = \lambda_i$ or $\mu_i = \overline{\lambda_i}$\, for all $i=1,\dots, t$.
\end{ozn}
\begin{exam}
$$
\begin{array}{l}

$$
\left(
\begin{array}{ll}
\lambda & 0 \\
0       & \mu
\end{array} \right)
$$
\stackrel{\ast}{=}
$$
\left(
\begin{array}{ll}
\lambda & 0 \\
0       & \overline{\mu}
\end{array} \right);
$$
\:
$$
\left(
\begin{array}{ll}
\lambda & 1 \\
0       & \lambda
\end{array} \right)
$$
\not\stackrel{\ast}{=}
$$
\left(
\begin{array}{ll}
\lambda & 0 \\
0       & \overline{\lambda}
\end{array} \right).
$$
\qquad \qquad \qquad \qquad \qquad \qquad \qquad \qquad \qquad \qquad \qquad
\end{array}
$$

\end{exam}

\indent

The following theorem gives classification of linear maps from $\cc^2$ to $\cc^2$ up to topological conjugacy.

\begin{theorem}\label{klas-Ax-i-Cx}
Let $f$, $g: \cc^2 \rightarrow \cc^2$, $f(z)=Az$, $g(z)=Cz$ be linear maps.
$$
\begin{array}{ll}

\mbox{Then}\:\, f \stackrel{t}{\sim} g \:\: \mbox{if and only if} \:\:
$$
\left\{
\begin{array}{ll}
\rank(A_+)=\rank(C_+), \\
\rank(A_-)=\rank(C_-), \\
A_\infty = C_\infty, \\
A_0 \stackrel{\ast}{=} C_0.
\end{array} \right.
$$
\qquad &\qquad \qquad \qquad \qquad \qquad \qquad
\end{array}
$$
\end{theorem}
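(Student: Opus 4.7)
The plan is to reduce to the real $4$-dimensional case handled in Proposition~\ref{klas-vsix-liniu-vidob-5-5}. The identification $(z_1,z_2)\mapsto(\Re z_1,\Im z_1,\Re z_2,\Im z_2)$ is a homeomorphism $\cc^2\to\rr^4$ under which each $\cc$-linear map $z\mapsto Az$ becomes an $\rr$-linear map $x\mapsto A_\rr x$, with $A_\rr\in\rr^{4\times 4}$ the realification of $A$. Consequently $f\stackrel{t}{\sim}g$ if and only if the realifications $f_{\rr^4},g_{\rr^4}:\rr^4\to\rr^4$ are topologically conjugate, which by Proposition~\ref{klas-vsix-liniu-vidob-5-5} (applicable since $4\leq 5$) is equivalent to the standard four conditions on the canonical forms of $A_\rr$ and $C_\rr$.

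The bulk of the work is then to translate those conditions into conditions on $A$ and $C$ themselves. The key observation is that the complex Jordan form of $A_\rr$ consists of each block $J_k(\lambda)$ of $J_A$ together with its conjugate $J_k(\bar\lambda)$; equivalently, in the RCF of $A_\rr$, each real eigenvalue of $A$ produces two identical copies of its Jordan block, while each non-real eigenvalue $\lambda$ pairs with $\bar\lambda$ into a single real $2k\times 2k$ block. From this I read off the four translations. First, the ranks of $(A_\rr)_+$ and $(A_\rr)_-$ are twice those of $A_+$ and $A_-$, so the rank conditions are equivalent. Second, the determinants of $(A_\rr)_+$ and $(A_\rr)_-$ are always positive on the nontrivial part, being products of conjugate pairs $\lambda\bar\lambda=|\lambda|^2>0$; hence the sign conditions hold automatically. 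Third, since $0$ is real the condition $(A_\rr)_\infty=(C_\rr)_\infty$ reduces to $A_\infty=C_\infty$. Fourth, in $(A_\rr)_0$ the real unit eigenvalues $\pm 1$ simply duplicate, while a non-real eigenvalue of modulus $1$ contributes a real block that depends only on the unordered pair $\{\lambda,\bar\lambda\}$; this is precisely what the $\stackrel{\ast}{=}$ relation on $A_0,C_0$ captures.

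The main obstacle will be the fourth translation: verifying that a term-by-term replacement $\lambda\leftrightarrow\bar\lambda$ in the diagonal blocks of $A_0$ leaves $(A_\rr)_0$ unchanged, and conversely that $(A_\rr)_0=(C_\rr)_0$ forces at most such a replacement in corresponding blocks. The other three translations are routine bookkeeping once the block decomposition of $A_\rr$ under realification is spelled out, but for $A_0$ one must compare real canonical blocks of distinct combinatorial types (two copies of $J_k(\pm 1)$ versus a single conjugate-pair $2k\times 2k$ block) and confirm that $\stackrel{\ast}{=}$ on $A_0$ corresponds exactly to equality on $(A_\rr)_0$.
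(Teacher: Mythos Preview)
Your proposal is correct and follows essentially the same route as the paper: both reduce to the real $4$-dimensional classification via realification, invoke Proposition~\ref{klas-vsix-liniu-vidob-5-5}, and then translate the four conditions back, observing that ranks double, the sign conditions become automatic, $A_\infty$ passes through unchanged, and the $A_0$ condition becomes the $\stackrel{\ast}{=}$ relation. The paper is slightly more concrete (writing out $A_{\rr^4}$ and its eigenvalues explicitly for the $2\times 2$ case rather than speaking of general $J_k(\lambda)$ blocks), but the logic is identical, and your identification of the $A_0$ translation as the only nontrivial step matches the paper's reliance on the argument from Lemma~\ref{klas-liniu-vidob-C} at that point.
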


\begin{proof}
The maps $f(z)=Az$ and $g(z)=Cz$ from $\cc^2$ to $\cc^2$ are topologically conjugate if and only if the
corresponding linear maps $f_{\rr^4}(x)=A_{\rr^4}x$ and $g_{\rr^4}(x)=C_{\rr^4}x$ from $\rr^4$ to $\rr^4$ (with the real entried matrices) are topologically conjugate.
$$
\begin{array}{llllll}

\mbox{The map} \:\,f(z)=Az=
$$
\left(
\begin{array}{cc}
a & b \\
c & d
\end{array}\right)
$$
$$
\left(
\begin{array}{cc}
z_1 \\
z_2 \\
\end{array}\right),
$$
\:\, \mbox{from} \:\, \cc^2 \:\,\mbox{to}\:\, \cc^2, \:\, \mbox{corresponds to the map}
&\qquad &\qquad &\qquad &\qquad &\qquad
\end{array}
$$
$$
\begin{array}{lllll}

f_{\rr^4}(x)=A_{\rr^4}x=
$$
\left(
\begin{array}{ccccc}
a_1 & -a_2 & b_1 & -b_2  \\
a_2 &  a_1 & b_2 &  b_1  \\
c_1 & -c_2 & d_1 & -d_2  \\
c_2 &  c_1 & d_2 &  d_1
\end{array}\right)
$$
$$
\left(
\begin{array}{ccccc}
x_1 \\
x_2 \\
x_3 \\
x_4 \\
\end{array}\right),
$$
\:\, \mbox{from} \:\, \rr^4 \:\,\mbox{to}\:\, \rr^4.
\qquad & \qquad & \qquad & \qquad & \qquad
\end{array}
$$

The matrix $A_{\rr^4}$ has the following properties.

1) The eigenvalues of the matrix $A_{\rr^4}$ are:

$\lambda^{(1)}_{A_{\rr^4}}=\frac{1}{2}\left( a+d+\sqrt{(a-d)^2+4bc} \right)$;

$\lambda^{(2)}_{A_{\rr^4}}=\frac{1}{2}\left( a+d-\sqrt{(a-d)^2+4bc} \right)$;

$\lambda^{(3)}_{A_{\rr^4}}=\frac{1}{2}\left( \overline{a}+\overline{d}+\sqrt{(\overline{a}-\overline{d})^2+4\overline{b}\overline{c}} \right)$;

$\lambda^{(4)}_{A_{\rr^4}}=\frac{1}{2}\left( \overline{a}+\overline{d}-\sqrt{(\overline{a}-\overline{d})^2+4\overline{b}\overline{c}} \right)$.

%-------------------------2)------------------------------------
2)    $\overline{\lambda^{(1)}_{A_{\rr^4}}}=\lambda^{(3)}_{A_{\rr^4}}$;

\quad $\overline{\lambda^{(2)}_{A_{\rr^4}}}=\lambda^{(4)}_{A_{\rr^4}}$.

%-------------------------3)------------------------------------
3)    $\lambda^{(1)}_{A_{\rr^4}}=\lambda^{(1)}_A$;

\quad $\lambda^{(2)}_{A_{\rr^4}}=\lambda^{(2)}_A$.

%-------------------------4)------------------------------------
4) If we denote: $\lambda^{(1)}_{A_{\rr^4}}=\alpha+i\beta$, $\alpha$, $\beta \in \rr$;

\qquad \qquad \qquad \quad \, $\lambda^{(2)}_{A_{\rr^4}}=\xi+i\mu$, $\xi$, $\mu \in \rr$,

then by 2-d property ${\cal R}_{A_{\rr^4}}$ is:
$$
\begin{array}{ll}

$$
\left(
\begin{array}{ccccc}
\alpha & -\beta  & 0    & 0   \\
\beta  &  \alpha & 0    & 0   \\
0      &  0      & \xi  & -\mu\\
0      &  0      & \mu  &  \xi
\end{array}\right),
$$
\quad \mbox{or}\:
$$
\left(
\begin{array}{ccccc}
\alpha & -\beta  &    1   &    0    \\
\beta  &  \alpha &    0   &    1    \\
0      &    0    & \alpha & -\beta  \\
0      &    0    & \beta  &  \alpha
\end{array}\right).
$$
\end{array}
$$

%-------------------------5)------------------------------------
5) If the matrix ${A_{\rr^4}}$ has nonzero eigenvalues, whose moduli are less than 1, then the size of the matrix ${A_{\rr^4}}_+$ must be either $2\times 2$ or $4\times 4$, and $\sign\,(\,\det(  {A_{\rr^4}}_+ )\,)>0$ (it is easy to see by 4-th property).

Similarly it can be shown that $\sign\,(\,\det(  {A_{\rr^4}}_-  )\,)>0$.

Hence, by Proposition~\ref{klas-vsix-liniu-vidob-5-5} the maps $f_{\rr^4}(x)=A_{\rr^4}x$ and $g_{\rr^4}(x)=C_{\rr^4}x$ are topologically conjugate if and only if
\begin{equation} \label{lin-matr}
\left\{
\begin{array}{lllll}
\rank({A_{\rr^4}}_+)=\rank({C_{\rr^4}}_+), \\
\rank({A_{\rr^4}}_-)=\rank({C_{\rr^4}}_-), \\
{A_{\rr^4}}_\infty = {C_{\rr^4}}_\infty, \\
{A_{\rr^4}}_0 = {C_{\rr^4}}_0.
\end{array} \right.
\end{equation}

%-------------------------6)------------------------------------
6) If the matrix ${A_{\rr^4}}$ has nonzero eigenvalues, whose moduli are less than 1, then by 3-d and 4-th properties:  $\rank({A_{\rr^4}}_+)=2\rank(A_+)$.

Similarly it can be shown that $\rank({A_{\rr^4}}_-)=2\rank(A_-)$.

%-------------------------7)------------------------------------
7) If each of the matrices $A_{\rr^4}$ and $C_{\rr^4}$ has zero eigenvalues, then by 3-d property it is easy to see that ${A_{\rr^4}}_\infty = {C_{\rr^4}}_\infty$ if and only if $A_\infty = C_\infty$.

%-------------------------8)------------------------------------
8) If each of the matrices $A_{\rr^4}$ and $C_{\rr^4}$ has eigenvalues, whose moduli are equal to 1, then by 3-d property and by the similar arguments as in the proof of Lemma~\ref{klas-liniu-vidob-C} (namely, the 5-th property) it can be shown that ${A_{\rr^4}}_0 = {C_{\rr^4}}_0$ if and only if $A_0 \stackrel{\ast}{=} C_0$.

By properties $6 - 8$ we rewrite the conditions~\eqref{lin-matr} as follows:

$$
\left\{
\begin{array}{ll}
\rank(A_+)=\rank(C_+), \\
\rank(A_-)=\rank(C_-), \\
A_\infty = C_\infty, \\
A_0 \stackrel{\ast}{=} C_0.
\end{array} \right.
$$

Consequently, maps $f$, $g: \cc^2 \rightarrow \cc^2$, $f(z)=Az$ and $g(z)=Cz$ are topologically
$$
\begin{array}{llll}

\mbox{conjugate if and only if}\:\:
$$
\left\{
\begin{array}{ll}
\rank(A_+)=\rank(C_+), \\
\rank(A_-)=\rank(C_-), \\
A_\infty = C_\infty, \\
A_0 \stackrel{\ast}{=} C_0.
\end{array} \right.
$$
\qquad & \qquad & \qquad & \qquad \qquad \qquad \qquad
\end{array}
$$
\end{proof}

\subsubsection{Classification of affine maps from $\cc^2$ to $\cc^2$.}

The ideas that we used for classification of affine maps from $\rr^2$ to $\rr^2$ up to topological conjugacy, can be applied for the same classification of affine maps from $\cc^2$ to $\cc^2$.

The following results have the similar proofs as in the real entried case.

\begin{theorem}\label{pro-neryx-tochk-C-2}
Let $f$, $g: \cc^2 \rightarrow \cc^2$, $f(z)=Az+b$ and $g(z)=Az$.

Then $f\stackrel{t}{\sim}g$ if and only if there is $q \in \cc^2$ such that $f(q)=q$.
\end{theorem}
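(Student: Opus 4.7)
The plan is to mimic the proof of Theorem~\ref{pro-neryx-tochk} verbatim, simply replacing $\rr^n$ by $\cc^2$, since nothing in that argument used the real structure beyond the fact that translations are homeomorphisms and that topologically conjugate maps have the same number of fixed points. The statement splits into a necessity direction and a sufficiency direction, and both are very short.

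For \emph{necessity}, I would observe that $g(z)=Az$ has $0$ as a fixed point, so $g$ has at least one fixed point in $\cc^2$. Since $f\stackrel{t}{\sim}g$, the two maps have the same cardinality of fixed point sets (if $g=h\circ f\circ h^{-1}$ and $g(p)=p$, then $f(h^{-1}(p))=h^{-1}(p)$, so $h^{-1}$ gives a bijection between the fixed point sets). Hence $f$ also has at least one fixed point $q\in\cc^2$.

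For \emph{sufficiency}, assume there exists $q\in\cc^2$ with $f(q)=q$, i.e., $Aq+b=q$. Define the translation $h\colon\cc^2\to\cc^2$ by $h(z)=z+q$, which is clearly a homeomorphism with inverse $h^{-1}(z)=z-q$. A direct computation gives
\[
(h\circ g\circ h^{-1})(z)=h(g(z-q))=h(A(z-q))=A(z-q)+q=Az+(q-Aq)=Az+b=f(z),
\]
so $f=h\circ g\circ h^{-1}$, which means $f\stackrel{t}{\sim}g$.

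There is essentially no obstacle: the argument is a transliteration of the proof of Theorem~\ref{pro-neryx-tochk}. The only point worth noting, and the reason the author presumably writes out the statement separately, is that the identity $(I-A)q=b$ is exactly the fixed point equation for $f$, so the translation $h(z)=z+q$ really does conjugate $g$ to $f$; no use is made of the field being $\rr$ or $\cc$, nor of the dimension being $1$ or $2$, so in fact the same proof works for any $\cc^n$.
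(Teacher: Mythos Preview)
Your proposal is correct and is exactly the approach the paper takes: the paper's own proof of this theorem consists of the single sentence ``Similar to the proof of Theorem~\ref{pro-neryx-tochk},'' and your write-up is precisely that transliteration. Your remark that the argument works for arbitrary $\cc^n$ also matches the paper's Remark following this theorem.
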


\begin{remk}
Theorem~\ref{pro-neryx-tochk-C-2} is true for the corresponding maps from $\cc^n$ to $\cc^n$, $n\geq 1$.
\end{remk}

\begin{theorem}\label{osnovna-C-2}
Let $f$, $g: \cc^2 \rightarrow \cc^2$, $f(z)=Az+b$, $g(z)=Cz+d$ be affine maps.

If each of $f$ and $g$ has at least one fixed point, then $f\stackrel{t}{\sim}g$ if and only if
$$
\left\{
\begin{array}{lllll}
\rank(A_+)=\rank(C_+), \\
\rank(A_-)=\rank(C_-), \\
A_\infty = C_\infty, \\
A_0 \stackrel{\ast}{=} C_0.
\end{array} \right.
$$

If $f$ and $g$ have no fixed points, then\, $f\stackrel{t}{\sim}g$\, if and only if\, $\det A$ and $\det C$ are either simultaneously equal to 0 or simultaneously different from 0.
\end{theorem}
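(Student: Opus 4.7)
The plan is to reproduce the structure of the proof of Theorem~\ref{osnovna-R-2}, using the complex-case ingredients that have already been assembled and reinterpreting the non-linear conjugacies of the real proofs in the complex setting.

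When each of $f$ and $g$ has a fixed point, Theorem~\ref{pro-neryx-tochk-C-2} gives $f\stackrel{t}{\sim}r$ and $g\stackrel{t}{\sim}s$, where $r(z)=Az$ and $s(z)=Cz$; Theorem~\ref{klas-Ax-i-Cx} then characterises $r\stackrel{t}{\sim}s$ by exactly the listed conditions on $A_+$, $A_-$, $A_\infty$, $A_0$, which yields both directions of the equivalence.

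When $f$ and $g$ have no fixed points, necessity follows from the fact that topological conjugacy preserves bijectivity, so $\det A$ and $\det C$ must be simultaneously zero or simultaneously nonzero. For sufficiency the plan is first to establish the complex counterparts of Lemma~\ref{isnye-lambda_(A)=1}, Propositions~\ref{ne-isnye-neryx.-toch-1,1-(a_{12})}, \ref{ne-isnye-neryx.-toch-1,d} and \ref{topol. spryaj. zsyviv} (their proofs go through with $\rr^2$ replaced by $\cc^2$), and then to repeat the chain of conjugacies from Theorems~\ref{detA-ne-0+topol. spryaj. z syvom} and \ref{detA=0, detC=0}. In the bijective sub-case ($\det A\neq 0$), I would conjugate $f$ to its Jordan form in $\cc^2$, standardise the translation to $e_1$ or $e_2$ by a linear shift, remove the non-trivial linear part by a non-linear homeomorphism, and then invoke the complex analogue of Proposition~\ref{topol. spryaj. zsyviv} to identify the target translation with that of $g$. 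In the non-bijective sub-case ($\det A=\det C=0$), I would reduce $f$ and $g$ to the same form $\mathrm{diag}(1,0)\,z+(\eta_1,\eta_2)$ with $\eta_1\neq 0$ and identify them by a linear homeomorphism.

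The main technical hurdle will be extending the two non-linear conjugacies of the real proof to $\cc^2$. The map $(x_1,x_2)\mapsto(x_1,x_2\alpha^{-x_1})$ should be replaced by $(z_1,z_2)\mapsto(z_1,z_2\exp(-z_1\log\alpha))$, where $\log\alpha$ is any fixed branch of the logarithm of $\alpha\in\cc\setminus\{0,1\}$; this is entire in $(z_1,z_2)$ and has the obvious holomorphic inverse $(z_1,z_2)\mapsto(z_1,z_2\exp(z_1\log\alpha))$, hence is a homeomorphism of $\cc^2$. The quadratic shift $(x_1,x_2)\mapsto(x_1-\tfrac{1}{2}(x_2-\tfrac{1}{2})^2,x_2)$ extends verbatim to a polynomial automorphism of $\cc^2$ with explicit polynomial inverse. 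Once these two facts are in hand, the real-case arguments of Theorems~\ref{detA-ne-0+topol. spryaj. z syvom} and \ref{detA=0, detC=0} transfer word for word.
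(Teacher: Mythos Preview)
Your proposal is correct and follows exactly the route the paper intends: the paper gives no proof of Theorem~\ref{osnovna-C-2} beyond the sentence ``The following results have the similar proofs as in the real entried case,'' and your outline is precisely that transfer, combining Theorem~\ref{pro-neryx-tochk-C-2} with Theorem~\ref{klas-Ax-i-Cx} in the fixed-point case and replaying Theorems~\ref{detA-ne-0+topol. spryaj. z syvom} and \ref{detA=0, detC=0} over $\cc^2$ in the fixed-point-free case. In fact you go further than the paper by explicitly identifying the complex substitutes for the two non-linear conjugacies (the exponential shear $(z_1,z_2)\mapsto(z_1,z_2\exp(-z_1\log\alpha))$ and the quadratic shear), and both of these are indeed biholomorphisms of $\cc^2$ that intertwine the required maps.
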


\section{Acknowledgements.} The author wishes to express his sincere gratitude to V. Sharko, S. Maksymenko and the others scholars of the department of topology of Institute of mathematics for many interesting discussions during the preparation of this paper.

\indent
Kyiv National Taras Shevchenko University,\\
64, Volodymyrska st.,
01033 Kyiv, Ukraine.\\
E-mail: Budnitska\underline{ }T@ukr.net

\end{document}